\documentclass[a4paper]{amsart}
\usepackage[english]{babel}
\usepackage{amsmath,amsthm,amssymb,amsfonts}

\let\Oldepsilon\epsilon
\let\Oldvarepsilon\varepsilon
  \renewcommand{\varepsilon}{\Oldepsilon}
  \renewcommand{\epsilon}{\Oldvarepsilon}

\DeclareMathOperator{\Kern}{\mathcal{K}}
\DeclareMathOperator{\supp}{\mathrm{supp}}

\newcommand{\N}{\mathbb{N}}
\newcommand{\Z}{\mathbb{Z}}
\newcommand{\R}{\mathbb{R}}
\newcommand{\C}{\mathbb{C}}

\newcommand{\Space}{\mathrm{X}}

\newcommand{\dist}{\varrho}
\newcommand{\dil}{D}

\newcommand{\tc}{\,:\,}
\newcommand{\defeq}{\mathrel{:=}}

\newcommand{\leftopenint}{\left]}
\newcommand{\rightopenint}{\right[}
\newcommand{\leftclosedint}{\left[}
\newcommand{\rightclosedint}{\right]}

\DeclareMathOperator*{\esssup}{\mathrm{ess\,sup}}

\newtheorem{thm}{Theorem}
\newtheorem{prp}[thm]{Proposition}
\newtheorem{cor}[thm]{Corollary}
\newtheorem{lem}[thm]{Lemma}

\newcommand{\vecL}{\mathbf{L}}
\newcommand{\vecT}{\mathbf{T}}

\newcommand{\vecONE}{\tilde{1}}

\newcommand{\done}{{d_1}}
\newcommand{\dtwo}{{d_2}}
\newcommand{\jone}{{j}}
\newcommand{\jtwo}{{k}}
\newcommand{\alphaone}{{\alpha}}
\newcommand{\alphatwo}{{\beta}}

\newcommand{\chr}{\chi}

\newcommand{\lbN}{\gamma}
\newcommand{\tlbN}{\tilde \gamma}
\newcommand{\bdL}{k}

\begin{document}
\title[Multipliers for Grushin operators]{A sharp multiplier theorem for Grushin operators in arbitrary dimensions}
\author{Alessio Martini}
\address{Alessio Martini \\ Mathematisches Seminar \\ Christian-Albrechts-Universit\"at zu Kiel \\ Ludewig-Meyn-Str.\ 4 \\ D-24118 Kiel \\ Germany}
\email{martini@math.uni-kiel.de}
\author{Detlef M\"uller}
\address{Detlef M\"uller \\ Mathematisches Seminar \\ Christian-Albrechts-Universit\"at zu Kiel \\ Ludewig-Meyn-Str.\ 4 \\ D-24118 Kiel \\ Germany}
\email{mueller@math.uni-kiel.de}
\subjclass[2010]{43A85, 42B15}
\keywords{Grushin operators, spectral multipliers, Mihlin-H\"ormander multipliers, Bochner-Riesz means, singular integral operators.}

\thanks{The first-named author gratefully acknowledges the support of the Alexander von Humboldt Foundation.}

\begin{abstract}
In a recent work by A.\ Martini and A.\ Sikora, sharp $L^p$ spectral multiplier theorems for the Grushin operators acting on $\R^{\done}_{x'} \times \R^{\dtwo}_{x''}$ and defined by the formula
\[
L=-\sum_{\jone=1}^{\done}\partial_{x'_\jone}^2 - \left(\sum_{\jone=1}^{d_1}|x'_\jone|^2\right) 
\sum_{\jtwo=1}^{\dtwo}\partial_{x''_\jtwo}^2
\]
are obtained in the case $\done \geq \dtwo$. Here we complete the picture by proving sharp results in the case $\done < \dtwo$. Our approach exploits $L^2$ weighted estimates with ``extra weights'' depending only on the second factor of $\R^{\done} \times \R^{\dtwo}$ (in contrast with the mentioned work, where the ``extra weights'' depend on the first factor) and gives a new unified proof of the sharp results without restrictions on the dimensions.
\end{abstract}

\maketitle

\section{Introduction}

Let $\Space$ be $\R^{\done} \times \R^{\dtwo}$ with Lebesgue measure, and let $L$ be the Grushin operator on $\Space$, that is,
\[L = -\Delta_{x'} - |x'|^2 \Delta_{x''},\]
where $x',x''$ denote the two components of a point $x \in \R^\done \times \R^\dtwo$, while $\Delta_{x'},\Delta_{x''}$ are the corresponding partial Laplacians, and $|x'|$ is the Euclidean norm of $x'$. Since $L$ is an essentially self-adjoint operator on $L^2(\Space)$, a functional calculus for $L$ can be defined via spectral integration and, for all Borel functions $F : \R \to \C$, the operator $F(L)$ is bounded on $L^2(\Space)$ if and only if the function $F$, which is called \emph{spectral multiplier}, is essentially bounded with respect to the spectral measure.

The aim of this work is to give sufficient conditions for the $L^p$-boundedness (for $p \neq 2$) of an operator of the form $F(L)$, in terms of smoothness properties of the multiplier $F$. Namely, let $W_2^s(\R)$ denote the $L^2$ Sobolev space on $\R$ of (fractional) order $s$, and define a ``scale-invariant local Sobolev norm'' by the formula
\[ \|F\|_{MW_2^s} = \sup_{t > 0} \|\eta \, F_{(t)} \|_{W_2^s},\]
where $F_{(t)}(\lambda) = F(t\lambda)$, and $\eta \in C^\infty_c(\leftopenint 0,\infty \rightopenint)$ is a nontrivial auxiliary function (different choices of $\eta$ give rise to equivalent local norms). Our main results then read as follows.

\begin{thm}\label{thm:mihlinhoermander}
Suppose that a function $F : \R \to \C$ satisfies
\[\|F\|_{MW_2^s} < \infty\]
for some $s > (\done+\dtwo)/2$. Then the operator $F(L)$ is of weak type $(1,1)$ and bounded on $L^p(\Space)$ for all $p \in \leftopenint 1,\infty \rightopenint$. In addition, for all $p \in \leftopenint 1,\infty \rightopenint$,
\[
\|F(L)\|_{L^1 \to L^{1,\infty}} \leq C_s \|F\|_{MW_2^s}, \qquad \|F(L)\|_{L^p \to L^{p}} \leq C_{p,s} \|F\|_{MW_2^s}.
\]
\end{thm}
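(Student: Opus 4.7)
My overall strategy would be to combine a weighted $L^2$ Plancherel-type estimate for $L$ with the standard singular-integral machinery on the doubling metric measure space $(\Space,\dist,dx)$, where $\dist$ is the sub-Riemannian distance associated with $L$ and the balls have volume $\sim r^{\done+\dtwo}$ at small scales; this is what accounts for the topological dimension $\done+\dtwo$ in the threshold. By a dyadic decomposition of $F$ and the rescaling $L \mapsto tL$, one reduces matters to controlling, for every $F \in W_2^s(\R)$ with $\supp F \subseteq [1/4,4]$, the integral kernel $K_{F(\sqrt{L})}$ off the diagonal in a weighted $L^2$ sense. Combined with the finite propagation speed of the wave equation for $\sqrt{L}$, this will yield the Hörmander integral condition on the kernel and hence, via the Duong--Ouhabaz--Sikora scheme, the weak-type $(1,1)$ estimate; interpolation with the trivial $L^2$ bound and self-adjointness of $F(L)$ then give the full range $p \in \leftopenint 1,\infty \rightopenint$.

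The heart of the argument is a weighted Plancherel estimate of the shape
\[
\esssup_{y \in \Space} \int_{\Space} |K_{F(\sqrt{L})}(x,y)|^2 \, w(x,y)^{2\alpha} \, dx \leq C_{\alpha} \, \|F\|_{L^2}^2
\]
valid for all $\alpha < (\done+\dtwo)/2$ and for $F$ supported in a fixed compact subset of $\leftopenint 0,\infty \rightopenint$. In the regime $\done \geq \dtwo$ of Martini--Sikora, $w$ is essentially a power of $|x'-y'|$; the novelty required here is to take a weight depending only on the second factor, morally a power of $|x''-y''|$, which is the correct choice when $\done < \dtwo$ because it matches the slower-decaying direction of the Grushin geometry. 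To access such a weight I would use the partial Fourier transform in $x''$: since each $\partial_{x''_\jtwo}$ commutes with $L$, on the Fourier side $L$ decouples into a one-parameter family of rescaled Hermite operators $-\Delta_{x'} + |\xi''|^2 |x'|^2$ on $\R^{\done}$, whose spectral decomposition is explicit, with eigenvalues $(2|\alpha|+\done)|\xi''|$ and Hermite-type eigenfunctions rescaled by $|\xi''|^{1/2}$. The kernel $K_{F(\sqrt{L})}(x,y)$ then admits an expression as a Hermite series of oscillatory integrals in $\xi''$.

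The step I expect to be the main obstacle is precisely this weighted Plancherel inequality. One has to trade the $x''$-weight for $\xi''$-derivatives on the Fourier side and then control, uniformly in the Hermite index $\alpha$, the interplay between differentiation in $\xi''$, the oscillatory factor $e^{i(x''-y'')\cdot \xi''}$, the implicit $|\xi''|$-dependence of the Hermite eigenfunctions through the anisotropic dilation $x' \mapsto |\xi''|^{1/2} x'$, and the spectral localisation imposed by $F$. One further needs a sharp summation in $|\alpha|$ that yields exactly the exponent $(\done+\dtwo)/2$ regardless of the relative sizes of $\done$ and $\dtwo$; it is this summation that should produce a unified treatment of both regimes and explain why the second-factor weight succeeds in the complementary range $\done < \dtwo$.

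Once the weighted Plancherel estimate is in place, the remaining steps should be essentially routine: a Mauceri--Meda / Cowling--Sikora-type argument converts the weighted $L^2$ bound, together with Davies--Gaffney-type off-diagonal estimates coming from finite propagation speed, into the Hörmander integral condition for the kernel of $F(L)$, after which the theory of singular integrals on non-isotropic doubling spaces of Hebisch--Steger type applies directly on $(\Space,\dist,dx)$ and delivers both norm inequalities claimed in the theorem.
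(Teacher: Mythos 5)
Your overall architecture --- dyadic decomposition of $F$, reduction to a weighted $L^2$ estimate for compactly supported multipliers, partial Fourier transform in $x''$ turning $L$ into a family of rescaled Hermite operators with eigenvalues $|\xi|(2|n|_1+\done)$, an ``extra weight'' depending on $x''-y''$, and the Duong--Ouhabaz--Sikora/Hebisch-type singular-integral scheme to convert the weighted $L^2$ bound into weak type $(1,1)$ --- is exactly the paper's, and you correctly isolate the hard step: trading the $x''$-weight for $\xi$-derivatives that hit both the multiplier and the $\xi$-dependent Hermite functions, with a sharp summation over the Hermite index.

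However, the weighted Plancherel estimate you place at the heart of the argument is false as stated, in two respects, and the correct version changes the bookkeeping that produces the sharp threshold. First, the right-hand side cannot be $\|F\|_{L^2}^2$: multiplying the kernel by $|x''-y''|^{\alpha}$ amounts to $\alpha$ derivatives in $\xi$, some of which unavoidably fall on $F(|\xi|(2|n|_1+\done))$ rather than on the Hermite factors, so the estimate must cost $\|F\|_{W_2^{\alpha}}^2$ (already for the Euclidean Laplacian the weighted Plancherel identity with weight $|x|^{2\alpha}$ produces a $W_2^{\alpha}$ norm, and a bump of width $\epsilon$ shows the $L^2$ version fails for every $\alpha>0$). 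Second, the admissible exponent is not $\alpha<(\done+\dtwo)/2$ but only $r<\dtwo/2$, the obstruction coming from the summation of $H_{\done,\ell}$ over the spectral truncation in $|\xi|$ and the normalization $|B(y,1)|\sim\max\{1,|y'|\}^{\dtwo}$; relatedly the weight must be the inhomogeneous $w(x,y)=1+|x''-y''|/(1+|y'|)$, uniformly in $y$, not a plain power of $|x''-y''|$. The sharp exponent is then recovered by a different arithmetic than the one you describe: since $|x''-y''|$ scales like $\dist^2$, an exponent $r$ on $w$ buys a gain of $2r$ in the integrability condition $\alpha'+2r>(\done+2\dtwo)/2$ for $w^{-2r}(1+\dist)^{-2\alpha'}$ while costing only $r$ derivatives of $F$, for a net gain of $r\uparrow\dtwo/2=(Q-\done-\dtwo)/2$, exactly the amount needed to descend from $Q/2$ to $(\done+\dtwo)/2$. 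One also needs an interpolation between the $w^r$-weighted Plancherel estimate and the Gaussian-heat-kernel weighted estimate to reach the optimal derivative count $\beta>\alpha+r$ before applying H\"older; without this cost--benefit accounting your scheme stalls at a non-sharp threshold.
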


\begin{thm}\label{thm:bochnerriesz}
Suppose that $\kappa > (\done+\dtwo-1)/2$ and $p \in \leftclosedint 1,\infty \rightclosedint$. Then the Bochner-Riesz means $(1-tL)_+^\kappa$ are bounded on $L^p(\Space)$ uniformly in $t \in \leftclosedint 0,\infty \rightopenint$.
\end{thm}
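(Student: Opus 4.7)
\emph{Strategy.} I would deduce Theorem \ref{thm:bochnerriesz} from Theorem \ref{thm:mihlinhoermander} by verifying that the Bochner--Riesz multiplier $F_t(\lambda) \defeq (1-t\lambda)_+^\kappa$ has uniformly bounded scale-invariant Sobolev norm $\|F_t\|_{MW_2^s}$ for a suitably chosen $s > (\done+\dtwo)/2$. The driving regularity fact is that $(1-\lambda)_+^\kappa$ belongs to $W_2^s(\R)$ if and only if $s < \kappa + 1/2$, a classical consequence of the Fourier decay $|\xi|^{-\kappa-1}$ produced by the H\"older-type singularity at $\lambda = 1$. Since $\kappa > (\done+\dtwo-1)/2$ by hypothesis, I can pick $s$ with
\[
(\done+\dtwo)/2 < s < \kappa + 1/2.
\]
The $MW_2^s$ norm is dilation-invariant in the spectral variable, so $\|F_t\|_{MW_2^s} = \|F_1\|_{MW_2^s}$, and a routine case analysis of $\|\eta \cdot (F_1)_{(\tau)}\|_{W_2^s}$ in the regimes $\tau \ll 1$, $\tau \sim 1$, $\tau \gg 1$ (relative to $\supp \eta$) combined with the above regularity yields a finite uniform bound.

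\emph{Application.} Theorem \ref{thm:mihlinhoermander} then immediately yields uniform boundedness of $(1-tL)_+^\kappa$ on $L^p(\Space)$ for every $p \in \leftopenint 1, \infty \rightopenint$ and a uniform weak-type $(1,1)$ estimate. Once the strong $L^1$ bound has been established (see below), $L^\infty$ follows by duality, exploiting self-adjointness of $L$ and the reality of $F_t$.

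\emph{The endpoint $p=1$ --- the main obstacle.} The delicate step is promoting weak-type to strong-type $(1,1)$ uniformly in $t$. The natural plan is a dyadic decomposition $F_t = \sum_{n \ge 0} F_{t,n}$, where $F_{t,n}$ is a smooth cut-off of $F_t$ localized in an interval of length $\sim 2^{-n}/t$ at distance $\sim 2^{-n}/t$ from $\lambda = 1/t$, satisfying $\|F_{t,n}\|_\infty \lesssim 2^{-n\kappa}$. A direct Mihlin--H\"ormander-type bound of $\|F_{t,n}(L)\|_{L^1 \to L^1}$ would cost a factor $2^{ns}$ with $s > (\done+\dtwo)/2$, producing at best $2^{n(s-\kappa)}$, which barely fails to sum at the threshold $\kappa = (\done+\dtwo-1)/2$. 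Closing this half-derivative gap is the heart of the argument; I expect that it can be accomplished by exploiting the compact spectral support of each $F_{t,n}$ together with the weighted Plancherel estimates (with the ``extra weights depending on the second factor'' announced in the abstract) that underlie Theorem \ref{thm:mihlinhoermander}, thereby obtaining a geometric factor $2^{-n\varepsilon}$ for some $\varepsilon > 0$ whose summation over $n$ produces the desired uniform strong $L^1$ bound.
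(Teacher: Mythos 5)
Your reduction of the range $1<p<\infty$ to Theorem~\ref{thm:mihlinhoermander} is correct: $\|(1-t\cdot)_+^\kappa\|_{MW_2^s}=\|(1-\cdot)_+^\kappa\|_{MW_2^s}<\infty$ for any $s<\kappa+1/2$, and such an $s>(\done+\dtwo)/2$ exists precisely under the hypothesis on $\kappa$. The genuine gap is at the endpoint $p=1$, where your argument stops at ``I expect that it can be accomplished''. Moreover, the accounting that leads you to believe an extra, yet-to-be-found $2^{-n\varepsilon}$ must be extracted from the weighted Plancherel estimates is mistaken: the bound ``$2^{ns}$ per piece, hence $2^{n(s-\kappa)}$'' is a Mihlin-type (sup-of-derivatives) count, whereas the norm in Proposition~\ref{prp:l1estimate} is $L^2$-based. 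After rescaling to unit scale, the dyadic piece $F_{t,n}$ is supported on an interval of length $\sim 2^{-n}$ with $j$-th derivatives $\lesssim 2^{-n\kappa}2^{nj}$, so by Cauchy--Schwarz (and interpolation in $\beta$) its $W_2^\beta$ norm is $\lesssim 2^{-n\kappa}2^{n\beta}\,(2^{-n})^{1/2}$. Feeding this into Proposition~\ref{prp:l1estimate} with $\alpha=0$ and any $\beta\in\leftopenint(\done+\dtwo)/2,\kappa+1/2\rightopenint$ (a nonempty interval exactly when $\kappa>(\done+\dtwo-1)/2$) gives $\esssup_y\|\Kern_{F_{t,n}(L)}(\cdot,y)\|_1\lesssim 2^{-n(\kappa+1/2-\beta)}$, which sums geometrically and yields the uniform $L^1\to L^1$ bound; $L^\infty\to L^\infty$ then follows from the Hermitian symmetry of the kernel (equivalently, self-adjointness of $F_t(L)$). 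In other words, the half-derivative gain is already contained in the estimate you were prepared to dismiss; the role of the ``extra weights'' is only to lower the threshold from $Q/2=(\done+2\dtwo)/2$ to $(\done+\dtwo)/2$, while the further $1/2$ is a purely one-dimensional fact about $L^2$ Sobolev norms of narrowly supported functions.

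One more detail your decomposition overlooks: the low-frequency part of the multiplier. The piece of $(1-t\lambda)_+^\kappa$ living near $\lambda=0$ does not vanish there, so summing Proposition~\ref{prp:l1estimate} over dyadic scales $R\to 0$ diverges; the standard remedy is to write $(1-t\lambda)_+^\kappa=1+\bigl((1-t\lambda)_+^\kappa-1\bigr)$, observe that the identity is trivially bounded and that the second term is $O(t\lambda)$ near the origin, which makes the sum over small scales convergent. With these two points supplied, your argument coincides with the paper's: the paper proves Proposition~\ref{prp:l1estimate} and delegates the deduction of both theorems to \cite{martini_grushin}, where exactly this dyadic computation is carried out.
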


These results are sharp, in the sense that the lower bounds on the order of differentiability $s$ in Theorem \ref{thm:mihlinhoermander} and on the order $\kappa$ of the Bochner-Riesz means in Theorem \ref{thm:bochnerriesz} cannot be decreased.

In the case $\done \geq \dtwo$, the results above are contained in a joint work of the first-named author and Adam Sikora \cite{martini_grushin}, to which we refer for a discussion of the related literature (see also \cite{folland_hardy_1982,mauceri_vectorvalued_1990,christ_multipliers_1991,hebisch_multiplier_1993,mller_spectral_1994,hebisch_functional_1995,mller_marcinkiewicz_1996,cowling_spectral_2001,duong_plancherel-type_2002,meyer_estimates_2006,robinson_analysis_2008,jotsaroop_riesz_2011}), and for a proof of the mentioned sharpness (based on \cite{kenig_divergence_1982}). In fact, \cite{martini_grushin} contains some results for the case $\done < \dtwo$ too, which however are not sharp. The new approach presented here differs from the one of \cite{martini_grushin} even in the case $\done \geq \dtwo$, and gives a unified treatment of the sharp results without any restriction on the pair $(\done,\dtwo)$.

\section{Structure of the proof}

Let $\dist$ be the control distance on $\Space$ associated to the Grushin operator $L$, and denote by $B(x,r)$ the open $\dist$-ball of center $x$ and radius $r$, and by $|B(x,r)|$ its Lebesgue measure. Denote moreover by $\Kern_{F(L)}$ the integral kernel of the operator $F(L)$. As shown in \cite{martini_grushin}, Theorems~\ref{thm:mihlinhoermander} and \ref{thm:bochnerriesz} are consequences of the following $L^1$ weighted estimate (corresponding to \cite[Corollary 14]{martini_grushin} in the case $\done \leq \dtwo$).

\begin{prp}\label{prp:l1estimate}
For all $R > 0$, $\alpha \geq 0$, $\beta > \alpha + (\done+\dtwo)/2$, and for all functions $F : \R \to \C$ such that $\supp F \subseteq \leftclosedint R^2, 4R^2 \rightclosedint$,
\begin{equation}\label{eq:l1estimate}
\esssup_{y \in \Space} \|(1+R\dist(\cdot,y))^\alpha \, \Kern_{F(L)}(\cdot,y)\|_1 \leq C_{\alpha,\beta} \|F_{(R^2)}\|_{W_2^\beta}.
\end{equation}
\end{prp}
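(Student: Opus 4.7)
By the natural dilations $\delta_t(x',x'')=(tx',t^2 x'')$ — under which $L$ transforms homogeneously of degree $2$ and $\dist$ of degree $1$ — a standard change of variables reduces \eqref{eq:l1estimate} to the case $R=1$; in what follows $F$ is supported in $[1,4]$ and the target is $\|(1+\dist(\cdot,y))^\alpha\Kern_{F(L)}(\cdot,y)\|_1\lesssim\|F\|_{W_2^\beta}$. The passage from this $L^1$ norm to an $L^2$ estimate is performed by Cauchy--Schwarz with an auxiliary \emph{extra weight} $W(x'',y'')$ depending only on the second factor: writing $(1+\dist(x,y))^\alpha$ as a product of $W^{-1/2}(1+\dist)^{\alpha-\gamma}$ and $W^{1/2}(1+\dist)^{\gamma}$, the left-hand side of \eqref{eq:l1estimate} is dominated by $A(y)^{1/2}B(y)^{1/2}$, where $A(y)=\int W(x'',y'')^{-1}(1+\dist(x,y))^{2(\alpha-\gamma)}\,dx$ and $B(y)=\int W(x'',y'')(1+\dist(x,y))^{2\gamma}|\Kern_{F(L)}(x,y)|^2\,dx$. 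For $W(x'',y'')=(1+|x''-y''|)^{2N}$ with $N$ of order $\dtwo$, a direct computation using the two-regime structure of Grushin balls shows that $A(y)$ is finite uniformly in $y$ already for $\gamma-\alpha$ slightly larger than $(\done+\dtwo)/2$; without the extra weight one would be forced to $\gamma-\alpha>(\done+2\dtwo)/2$, the full homogeneous dimension at singular points, which would give the wrong regularity threshold.

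The weighted $L^2$ bound for $B(y)$ is then reduced, via partial Fourier transform in $x''-y''$, to estimates for the frequency-dependent family $L_{\xi''}=-\Delta_{x'}+|\xi''|^2|x'|^2$: since $L$ commutes with translations in $x''$,
\begin{equation*}
\Kern_{F(L)}(x,y) = (2\pi)^{-\dtwo}\int_{\R^{\dtwo}} e^{i\xi''\cdot(x''-y'')}\Kern_{F(L_{\xi''})}(x',y')\,d\xi''.
\end{equation*}
Each $L_{\xi''}$ is a rescaled harmonic oscillator with spectrum $\{|\xi''|(2n+\done):n\in\N\}$ and explicit Hermite eigenfunctions. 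Parseval in $x''-y''$ translates the weight $W$ into $N$ derivatives in $\xi''$ acting on $\Kern_{F(L_{\xi''})}(x',y')$, while the weights $(1+\dist)^{2\gamma}$ are realised through commutators with the vector fields naturally associated to $L$. A Hermite expansion of $\Kern_{F(L_{\xi''})}$ then reduces matters to summing and integrating $|F(|\xi''|(2n+\done))|^2$ against Hermite projector kernels (and their $\xi''$-parameter derivatives); the support of $F$ in $[1,4]$ confines the relevant pairs $(n,\xi'')$ to a region of controlled measure, which is the source of the gain in the regularity exponent.

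The main technical obstacle is the uniform control of the $\xi''$-parameter derivatives of the Hermite projectors at arbitrary Hermite index $n$: the resulting bounds must be sharp enough that integration against $|F|^2$ yields the critical regularity $\beta>\alpha+(\done+\dtwo)/2$ rather than the naive $(\done+2\dtwo)/2$. The novelty compared to \cite{martini_grushin} lies precisely in placing the extra weight on the second factor $\R^{\dtwo}$ rather than on the first — where the analogous argument would require $\done\geq\dtwo$ — and this is exactly what enables the approach to work for all pairs $(\done,\dtwo)$.
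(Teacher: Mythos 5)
Your high-level architecture is the same as the paper's: reduce to $R=1$ by homogeneity, split the $L^1$ norm by Cauchy--Schwarz against an ``extra weight'' living on the second factor, bound the resulting non-kernel integral using the two-regime structure of the control distance, and prove the weighted $L^2$ (Plancherel-type) estimate via the Hermite expansion of $\Kern_{F(L_{\xi''})}$. However, the quantitative bookkeeping does not close, and the choice of weight is wrong in a way that breaks the argument. First, the weighted $L^2$ estimate with weight $(1+|x''-y''|)^{N}$ is only available for $N<\dtwo/2$: decomposing dyadically in the spectrum of $|\vecT|$, the piece at frequency $M$ contributes $M^{2N-\dtwo}(\chr_{[0,c]}(|y'|/M)+e^{-|y'|})$, and the sum over $M=2^k$ diverges as soon as $N\ge \dtwo/2$. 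So ``$N$ of order $\dtwo$'' is not admissible, and consequently you cannot make $A(y)$ uniformly bounded; the correct regime is $N$ just below $\dtwo/2$, where $A(y)$ is only bounded by (a power of $1+|y'|$ comparable to) $|B(y,1)|$, and this factor must be cancelled against the $|B(y,1)|^{-1}$ coming from the $L^2$ estimate. This is why the paper uses the normalized weight $w(x,y)=1+|x''-y''|/(1+|y'|)$ and states both estimates with explicit $|B(y,1)|$ factors. Second, even granting your claims, your derivative count gives the wrong threshold: the weight $W^{1/2}=(1+|x''-y''|)^N$ costs $N$ derivatives of the multiplier in the $L^2$ estimate, so your scheme would require $\beta>\gamma+N>\alpha+(\done+\dtwo)/2+N$, which is strictly worse than $\beta>\alpha+(\done+\dtwo)/2$. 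The actual mechanism is a trade-off: each power $r$ of the weight costs $r$ derivatives but relaxes the convergence condition on the first factor by $2r$ (because $|x''-y''|\sim\dist^2$ in the worst regime $\dist\sim|x'-y'|+|x''-y''|^{1/2}$), so the net gain is $r$, and letting $r\uparrow \dtwo/2$ converts the homogeneous dimension $(\done+2\dtwo)/2$ into $(\done+\dtwo)/2$. Your narrative attributes the entire gain to the first factor alone, which is not how the exponents balance.

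Beyond the bookkeeping, the proposal leaves the genuinely hard step unproved: the weighted Plancherel estimate itself. You correctly identify that the weight becomes $\xi''$-derivatives of the multiplier and of the Hermite data, but ``uniform control of the $\xi''$-parameter derivatives of the Hermite projectors'' is precisely where the work lies (in the paper: the discrete-difference calculus of Proposition~\ref{prp:derivativekernelformula}, the estimates on the coefficients $a_\ell$, and the refined bounds on $\sum_{[\ell]\le x}H_{\done,\ell}(b_\ell^{-1/2}u)$ of Lemma~\ref{lem:newhermiteestimate}, which produce the crucial factor $\chr_{[0,c]}(|y'|/M)+e^{-|y'|}$ encoding the dependence on $y'$). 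Also, the factor $(1+\dist(\cdot,y))^{2\gamma}$ inside the $L^2$ norm is not obtained by ``commutators with vector fields'' but by interpolating the weighted Plancherel estimate (the case $\alpha=0$) with the standard Gaussian-heat-kernel weighted estimate, as in Proposition~\ref{prp:interpolatedestimate}. As it stands the proposal is a plan whose key lemma is missing and whose exponents, if taken at face value, prove a strictly weaker statement.
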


This estimate in turn follows via H\"older's inequality from an $L^2$ weighted estimate of the form
\begin{equation}\label{eq:plancherel}
\esssup_{y \in \Space} |B(y,1/R)|^{1/2} \|w_R(x,y)^\gamma \, (1+R\dist(\cdot,y))^\alpha \, \Kern_{F(L)}(\cdot,y)\|_2 \leq C_{\alpha,\beta,\gamma} \|F_{(R^2)}\|_{W_2^\beta}
\end{equation}
for suitable weight functions $w_R : \Space \times \Space \to \leftclosedint 0,\infty \rightopenint$ and constraints on $\alpha,\beta,\gamma \in \leftclosedint 0,\infty \rightopenint$.

In \cite{martini_grushin} the weights $w_R(x,y)$ depend only on the first components $x',y'$ of $x,y$, and the proof of \eqref{eq:plancherel} is based on a subelliptic estimate satisfied by $L$. Such approach corresponds to the one adopted in \cite{hebisch_multiplier_1993} for the sublaplacian on a Heisenberg(-type) group $G$, where a weight function is used, that depends only on (the projection of the variable on) the first layer of $G$.

On the other hand, other works in the setting of Heisenberg groups \cite{mller_spectral_1994,mller_marcinkiewicz_1996} exploit weight functions depending on both layers.

The approach presented below differs from all the previous ones, since we use weight functions $w_R$ depending only on the second components $x'',y''$ of the variables $x,y$. In place of the subelliptic estimate used in \cite{martini_grushin}, here we perform a careful analysis based on the properties of the Hermite functions; in this sense, we are closer to the spirit of \cite{mller_spectral_1994,mller_marcinkiewicz_1996}, where instead identities for Laguerre functions are exploited.

We remark that the $L^2$ estimate \eqref{eq:plancherel} without the weights $w_R$ (that is, when $\gamma = 0$) holds true if $\beta > \alpha$, and this implies the $L^1$ estimate \eqref{eq:l1estimate} when $\beta > \alpha + Q/2$, where $Q$ is the ``homogeneous dimension'' $d_1 + 2d_2$ of the doubling space $\Space$ with distance $\dist$ and Lebesgue measure \cite{duong_singular_1999,robinson_analysis_2008}. The purpose of the ``extra weights'' $w_R$ is to pass from the homogeneous dimension $Q$ to the topological dimension $\done+\dtwo$. Since these two quantities differ by the dimension $\dtwo$ of the second factor of $\R^\done \times \R^\dtwo$, it appears necessary, when $d_2$ is larger than $d_1$, to employ weights $w_R(x,y)$ depending not only on the first components $x',y'$. In fact the technique presented here, in contrast to the one in \cite{martini_grushin}, does not put any constraint on the dimensions.

\section{Weighted estimates and discrete differentiation}
Given a point $x = (x',x'') \in \Space$, we denote by $x'_\jone$ and $x''_\jtwo$ the $\jone$-th component of $x'$ and the $\jtwo$-th component of $x''$. For all $\jone \in \{1,\dots,\done\}$, $\jtwo \in \{1,\dots,\dtwo\}$, let then $L_{\jone}$ and $T_{\jtwo}$ be the differential operators on $\Space$ given by
\[L_{\jone} = (-i\partial_{x'_\jone})^2 + (x'_\jone)^2 \sum_{l=1}^{\dtwo} (-i\partial_{x''_l})^2, \qquad\qquad T_{\jtwo} = -i\partial_{x''_{\jtwo}}.\]
If $(\dil_r)_{r > 0}$ is the family of dilations on $\Space$ defined by	
\[\dil_r(x',x'') = (rx',r^2 x''),\]
then
\[
L_\jone(f \circ \dil_r) = r^2 (L_\jone f) \circ \dil_r, \qquad T_{\jtwo} (f \circ \dil_r) = r^2 (T_{\jtwo} f) \circ \dil_r.
\]
The Grushin operator $L$ on $\Space$ is the sum $L_1 + \dots + L_\done$.

As shown in \cite{martini_grushin}, the operators $L_1,\dots,L_\done,T_1,\dots,T_{\dtwo}$ have a joint functional calculus; moreover, if $\vecL$ and $\vecT$ denote the ``vectors of operators'' $(L_1,\dots,L_\done)$ and $(T_1,\dots,T_\dtwo)$, one can obtain a quite explicit formula for the integral kernel $\Kern_{G(\vecL,\vecT)}$ of an operator $G(\vecL,\vecT)$ in the functional calculus, in terms of the Hermite functions. Namely, for all $\ell \in \N$, let $h_\ell$ denote the $\ell$-th Hermite function, that is,
\[
h_\ell(t) = (-1)^\ell (\ell! \, 2^\ell \sqrt{\pi})^{-1/2} e^{t^2/2} \left(\frac{d}{dt}\right)^\ell e^{-t^2},
\]
and set, for all $n \in \N^{\done}$, $u \in \R^{\done}$, $\xi \in \R^{\dtwo}$,
\[\tilde h_n(u,\xi) = |\xi|^{\done/4} h_{n_1}(|\xi|^{1/2} u_1) \cdots h_{n_{\done}}(|\xi|^{1/2} u_{\done}).\]
Finally, denote by $e_1,\dots,e_{\done}$ the standard basis vectors of $\R^\done$, and by $\vecONE$ the element $(1,\dots,1) = e_1 + \dots + e_\done$ of $\N^{\done}$.

\begin{prp}
For all bounded Borel functions $G : \R^\done \times \R^{\dtwo} \to \C$ compactly supported in $\R^\done \times (\R^{\dtwo} \setminus \{0\})$, if
\begin{equation}\label{eq:reparametrization}
m(n,\xi) = \begin{cases}
G(|\xi|(2n+\vecONE),\xi) &\text{when $n \in \N^\done$,}\\
0 &\text{when $n \in \Z^\done \setminus \N^\done$,}
\end{cases}
\end{equation}
then
\begin{equation}\label{eq:kernel}
\Kern_{G(\vecL,\vecT)}(x,y) 
= (2\pi)^{-\dtwo} \int_{\R^\dtwo} \sum_{n \in \N^\done} m(n,\xi) \, \tilde h_n(y',\xi) \, \tilde h_n(x',\xi) \, e^{i \langle \xi , x''-y'' \rangle} \,d\xi
\end{equation}
for almost all $x,y \in \Space$.
\end{prp}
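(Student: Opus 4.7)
The plan is to diagonalise $\vecL$ fibrewise after taking a partial Fourier transform in $x''$, and then simply read off the kernel. Denote by $\mathcal{F}''$ the partial Fourier transform in $x''$, with convention $\mathcal{F}''f(x',\xi) = \int_{\R^\dtwo} f(x',x'') e^{-i\langle\xi,x''\rangle}\,dx''$. Under $\mathcal{F}''$ each $T_\jtwo$ becomes multiplication by $\xi_\jtwo$, while
\[
L_\jone \;\longmapsto\; H_\jone(\xi) \defeq -\partial_{x'_\jone}^2 + |\xi|^2 (x'_\jone)^2,
\]
which for each fixed $\xi \neq 0$ is a rescaled one-dimensional harmonic oscillator in the variable $x'_\jone$. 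The classical Hermite identity $(-\partial_t^2 + t^2) h_\ell = (2\ell+1) h_\ell$ together with the obvious $|\xi|^{1/2}$-scaling shows that, for $\xi \neq 0$, the family $\{\tilde h_n(\cdot,\xi)\}_{n\in\N^\done}$ is an orthonormal basis of $L^2(\R^\done)$ consisting of joint eigenfunctions of $H_1(\xi),\dots,H_\done(\xi)$ with eigenvalues $|\xi|(2n_\jone + 1)$. This exhibits, on each fibre $\xi \neq 0$, the joint spectral resolution of the tuple $(\vecL,\vecT)$.

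Applied to a Schwartz test function $f$, the spectral resolution yields
\[
\mathcal{F}''\bigl(G(\vecL,\vecT)f\bigr)(x',\xi) = \sum_{n \in \N^\done} m(n,\xi)\, \tilde h_n(x',\xi) \int_{\R^\done} \mathcal{F}''f(y',\xi)\, \tilde h_n(y',\xi)\, dy',
\]
where the complex conjugate on $\tilde h_n$ is dropped since the Hermite functions are real-valued. Inverting $\mathcal{F}''$ in $\xi$ and substituting $\mathcal{F}''f(y',\xi) = \int_{\R^\dtwo} f(y',y'') e^{-i\langle\xi,y''\rangle}\,dy''$ produces
\[
G(\vecL,\vecT)f(x) = (2\pi)^{-\dtwo} \int_\Space \left[ \int_{\R^\dtwo} \sum_{n \in \N^\done} m(n,\xi)\, \tilde h_n(x',\xi)\, \tilde h_n(y',\xi)\, e^{i\langle\xi,x''-y''\rangle}\,d\xi \right] f(y)\,dy,
\]
and the bracketed quantity is, by definition of the integral kernel, $\Kern_{G(\vecL,\vecT)}(x,y)$.

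The only genuine obstacle is justifying the Fubini-type exchanges and the convergence of the sum/integral that defines the kernel. This is settled by the support hypothesis on $G$: there exist $0 < a < b$ and $A > 0$ such that $\supp G \subseteq \overline{B(0,A)} \times (\overline{B(0,b)} \setminus B(0,a))$, so $m(n,\xi)$ is nonzero only when $a \leq |\xi| \leq b$ and $|\xi|(2n_\jone + 1) \leq A$ for every $\jone$. Consequently the $\xi$-integration runs over a bounded set bounded away from the origin, for each such $\xi$ only finitely many multi-indices $n$ contribute, and $|m| \leq \|G\|_\infty$. Combined with the uniform Gaussian decay of the Hermite functions, this makes the integrand absolutely integrable, justifies every interchange performed above, and yields \eqref{eq:kernel}.
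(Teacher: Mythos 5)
Your argument is correct and is the standard one: the paper itself offers no proof here, deferring to \cite[Proposition 5]{martini_grushin}, and that reference proceeds exactly as you do, by conjugating with the partial Fourier transform in $x''$ and diagonalising the resulting rescaled harmonic oscillators via the Hermite basis. The only point you gloss over is that the fibrewise diagonalisation really does give the joint spectral resolution of $(\vecL,\vecT)$ (i.e.\ that these operators, which a priori are only essentially self-adjoint and commuting on test functions, admit the asserted joint functional calculus); this is a standard fact, established in the cited reference, and the rest of your justification --- compact support of $G$ forcing a bounded $\xi$-domain away from the origin and finitely many contributing $n$ --- is exactly what is needed to legitimise the interchanges.
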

\begin{proof}
See \cite[Proposition 5]{martini_grushin}.
\end{proof}

The relation \eqref{eq:kernel} between the kernel $\Kern_{G(\vecL,\vecT)}$ and the multiplier $G$ -- or rather its reparametrization $m$ -- involves a partial Fourier transform. This suggests that applying a suitable multiplication operator to the kernel may correspond to applying a differential operator to the multiplier. The presence of the Hermite expansion, however, make things more complicated, and leads one to considering discrete difference operators as well as continuous derivatives on the spectral side. In order to give a precise form to these observations, we introduce a certain amount of notation.

For all $\ell \in \Z$, set $a_\ell = \sqrt{\ell(\ell-1)}$ if $\ell > 0$ and $a_\ell = 0$ otherwise. Let us define the following operators on functions $f : \Z^\done \times \R^\dtwo \to \C$:
\begin{align*}
\tau_\jone f(n,\xi) &= f(n+2e_\jone, \xi),\\
\delta_\jone f(n,\xi) &= f(n,\xi) - f(n-2e_\jone,\xi),\\
N_{\jone,\rho,s} f(n,\xi) &= \begin{cases} a_{n_\jone+2\rho} f(n,\xi) &\text{if $s = 0$,}\\
N_{\jone,\rho,s-1} f(n,\xi) - N_{\jone,\rho-1,s-1} f(n,\xi) &\text{if $s > 0$,}\end{cases}\\
\partial_\jtwo f(n,\xi) &= \frac{\partial}{\partial \xi_\jtwo} f(n,\xi),
\end{align*}
for all $\jone \in \{1,\dots,\done\}$, $\jtwo \in \{1,\dots,\dtwo\}$, $\rho \in \Z$, $s \in \N$. Note that $\tau_j$ is invertible, and $\delta_\jone f = f - \tau_\jone^{-1} f$. We will also use the multiindex notation as follows:
\[\tau^\alphaone = \tau_1^{\alphaone_1} \cdots \tau_\done^{\alphaone_\done}, \qquad \delta^\alphaone = \delta_1^{\alphaone_1} \cdots \delta_\done^{\alphaone_\done}, \qquad \partial^\alphatwo = \partial_1^{\alphatwo_1} \cdots \partial_\dtwo^{\alphatwo_\dtwo},\]
for all $\alphaone \in \N^\done$ and $\alphatwo \in \N^\dtwo$; in fact, $\tau^\alphaone$ is defined for all $\alpha \in \Z^\done$. Inequalities between multiindices, such as $\alpha \leq \alpha'$, shall be understood componentwise. Moreover $|\cdot|_1$ will denote the $1$-norm, that is, for all $t \in \R^d$, $|t|_1 = |t_1| + \dots + |t_d|$.

For convenience, set $h_\ell = 0$ for all $\ell < 0$, and extend the definition of $\tilde h_n$ to all $n \in \Z^\done$; hence $\tilde h_n = 0$ for all $n \in \Z^\done \setminus \N^\done$.

\begin{prp}\label{prp:derivativekernelformula}
Let $G : \R^\done \times \R^{\dtwo} \to \C$ be smooth and compactly supported in $\R^\done \times (\R^{\dtwo} \setminus \{0\})$, and let $m(n,\xi)$ be defined by \eqref{eq:reparametrization}.
For all $\alphatwo \in \N^\dtwo$, we have
\begin{multline*}
(x''-y'')^\alphatwo \, \Kern_{G(\vecL,\vecT)}(x,y) \\
= \int_{\R^\dtwo} \sum_{n \in \Z^\done} \Bigl[ \sum_{\iota \in I_\alphatwo} \Theta_\iota(\xi) \, \partial^{\alphatwo^{\iota}} \mathcal{N}_\iota \, \tau^{\tilde\alphaone^{\iota}} \delta^{\alphaone^{\iota}} m(n,\xi) \, \tilde h_{n+2r^\iota}(y',\xi) \Bigr] \, \tilde h_n(x',\xi) \, e^{i \langle \xi , x''-y'' \rangle} \,d\xi
\end{multline*}
for almost all $x,y \in \Space$, where $I_\alphatwo$ is a finite set and, for all $\iota \in I_\alphatwo$,
\begin{enumerate}
\item[(i)] $\alphatwo^{\iota} \in \N^\dtwo$ and $\alphatwo^{\iota} \leq \alphatwo$,
\item[(ii)] $\alphaone^{\iota},\tilde\alphaone^{\iota} \in \N^\done$ and $|\alphaone^\iota|_1 + |\alphatwo^\iota|_1 \leq |\alphatwo|_1$,
\item[(iii)] if $|\alphatwo|_1> 0$ then $|\alphaone^\iota|_1 +|\alphatwo^\iota|_1 > 0$,
\item[(iv)] $r^\iota \in \Z^\done$ and $|r^\iota|_1 \leq |\alphatwo|_1$,
\item[(v)] $\Theta_\iota$ is a smooth function on $\R^\dtwo \setminus \{0\}$, homogeneous of degree $|\alphatwo^\iota|_1 - |\alphatwo|_1$,
\item[(vi)] $\mathcal{N}_\iota$ is a composition product of the form
\begin{equation}\label{eq:compositionproduct}
N_{1,\rho^1_1,s^1_{1}} \cdots N_{1,\rho^1_{u_1},s^1_{u_1}} \cdots N_{\done,\rho^\done_1,s^\done_1} \cdots N_{\done,\rho^\done_{u_\done},s^\done_{u_\done}}
\end{equation}
with $u_1+\dots+u_\done \leq |\alphatwo|_1-|\alphatwo^\iota|_1$ and
\begin{gather*}
s^\jone_1 + \dots + s^\jone_{u_\jone} = u_\jone - \alphaone^\iota_\jone, \qquad s^\jone_l - |\alphatwo|_1 \leq \rho^\jone_l \leq |\alphatwo|_1, \\ \max\{0,1-\rho^\jone_1,\dots,1-\rho^\jone_{u_\jone}\} \geq \alphaone^\iota_\jone-\tilde\alphaone^\iota_\jone
\end{gather*}
for all $\jone \in \{1,\dots,\done\}$ and $l \in \{1,\dots,u_\jone\}$.	
\end{enumerate}
\end{prp}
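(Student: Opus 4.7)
The plan is to argue by induction on $|\alphatwo|_1$. The base case $\alphatwo = 0$ is exactly formula \eqref{eq:kernel}: take $I_0$ to consist of a single element with $\Theta \equiv 1$, $\mathcal{N}$ the empty composition, and all multiindex parameters equal to $0$; conditions (i)--(vi) are then vacuously or trivially satisfied. For the inductive step, fix $\jtwo$ and assume the formula for $\alphatwo$. Using the relation $(x''_\jtwo - y''_\jtwo)\,e^{i\langle \xi, x''-y''\rangle} = -i\,\partial_{\xi_\jtwo} e^{i\langle \xi, x''-y''\rangle}$ and integration by parts in $\xi_\jtwo$ (legitimate because $m$ has compact support in $\xi \neq 0$), I move one derivative $\partial_{\xi_\jtwo}$ onto the summand of the integral, then distribute via Leibniz over the four factors: (a) $\Theta_\iota(\xi)$; (b) $\partial^{\alphatwo^\iota}\mathcal{N}_\iota\tau^{\tilde\alphaone^\iota}\delta^{\alphaone^\iota}m(n,\xi)$; (c) $\tilde h_{n+2r^\iota}(y',\xi)$; (d) $\tilde h_n(x',\xi)$.

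The key technical input is an explicit formula for $\partial_{\xi_\jtwo}\tilde h_n$. From the Hermite recurrences $uh_\ell(u) = \sqrt{\ell/2}\,h_{\ell-1}(u) + \sqrt{(\ell+1)/2}\,h_{\ell+1}(u)$ and $h_\ell'(u) = \sqrt{\ell/2}\,h_{\ell-1}(u) - \sqrt{(\ell+1)/2}\,h_{\ell+1}(u)$, one derives the auxiliary identity $uh_\ell'(u) = \tfrac12 a_\ell h_{\ell-2}(u) - \tfrac12 a_{\ell+2} h_{\ell+2}(u) - \tfrac12 h_\ell(u)$. Substituting into the direct expansion of $\partial_{\xi_\jtwo}\tilde h_n(u,\xi)$ and noting the cancellation between the $\tilde h_n$-contribution coming from $\partial_{\xi_\jtwo}|\xi|^{\done/4}$ and the $\done$ copies of $-\tfrac12 \tilde h_n$ from the $-\tfrac12 h_\ell$ summand, one obtains
\[
\partial_{\xi_\jtwo}\tilde h_n(u,\xi) = \frac{\xi_\jtwo}{4|\xi|^2}\sum_{\jone=1}^{\done}\bigl(a_{n_\jone}\tilde h_{n-2e_\jone}(u,\xi) - a_{n_\jone+2}\tilde h_{n+2e_\jone}(u,\xi)\bigr),
\]
which directly exhibits both the shifts $n\mapsto n\pm 2e_\jone$ and the multipliers $a_{n_\jone+2\rho}$ implementing the building blocks $N_{\jone,\rho,0}$.

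Case (a) lowers the homogeneity of $\Theta_\iota$ by one, which exactly matches the new value $|\alphatwo^\iota|_1 - |\alphatwo+e_\jtwo|_1$ while leaving all other parameters fixed; case (b) increments $\alphatwo^\iota$ by $e_\jtwo$. Case (c) uses the above formula to produce two new index entries with $r^{\iota'} = r^\iota\mp e_\jone$, a new outermost factor $N_{\jone,\rho,0}$ (with $\rho\in\{r^\iota_\jone, r^\iota_\jone+1\}$) inserted into $\mathcal{N}_\iota$, and $\Theta_{\iota'} = \xi_\jtwo \Theta_\iota/(4|\xi|^2)$. Case (d) is more delicate: after reindexing the summation variable $n\mapsto n\pm 2e_\jone$ to restore $\tilde h_n(x',\xi)$, I obtain shifts $\tau_\jone^{\pm 1}$ on the $m$-block, and use the commutation relation $\tau_\jone N_{\jone,\rho,s} = N_{\jone,\rho+1,s}\tau_\jone$ to push $\tau_\jone$ past $\mathcal{N}_\iota$ at the cost of shifting the $\rho$-indices. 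To avoid the forbidden negative exponent in $\tau_\jone^{-1}$, I rewrite $\tau_\jone^{-1} = 1-\delta_\jone$, which either introduces a new $\delta_\jone$ incrementing $\alphaone^\iota_\jone$ by one (paired with an appended $N_{\jone,\rho,0}$) or, via the telescoping $N_{\jone,\rho,s-1}-N_{\jone,\rho-1,s-1}=N_{\jone,\rho,s}$ applied to two adjacent $\tau$-shifted copies of $\mathcal{N}_\iota$, increases some $s^\jone_l$ by one.

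The principal obstacle is the combinatorial bookkeeping required to verify condition (vi) -- specifically the identity $s^\jone_1+\cdots+s^\jone_{u_\jone} = u_\jone-\alphaone^\iota_\jone$, the range $s^\jone_l - |\alphatwo|_1 \leq \rho^\jone_l \leq |\alphatwo|_1$, and the bound $\max\{0,1-\rho^\jone_1,\dots,1-\rho^\jone_{u_\jone}\} \geq \alphaone^\iota_\jone - \tilde\alphaone^\iota_\jone$ -- across all four subcases. The updates summarized above are tailored to preserve these invariants: each appended $N_{\jone,\rho,0}$ (with $s = 0$) increments $u_\jone$ in concert with the accompanying shift in $\tilde\alphaone^\iota_\jone$ or $\alphaone^\iota_\jone$; each increment of $\alphaone^\iota_\jone$ is matched by an appended $N_{\jone,\rho,0}$ with $\rho\leq 0$ so that $1-\rho\geq 1$ enters the max; and the telescoping produces $s^\jone_l\geq 1$ without touching $\alphaone^\iota_\jone$. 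Conditions (i), (ii), (iv), (v) are more immediate from the structure of the updates. Assembling the contributions of (a)--(d) across all $\iota\in I_\alphatwo$ and relabeling yields the new finite set $I_{\alphatwo+e_\jtwo}$, closing the induction.
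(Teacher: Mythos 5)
Your argument is correct and follows essentially the same route as the paper: induction on $|\beta|_1$ driven by the Hermite identity $2th'_\ell = a_\ell h_{\ell-2} - a_{\ell+2}h_{\ell+2} - h_\ell$, reindexing of the summation variable $n$, and the commutation relations of $\tau_j,\delta_j$ with the $N_{j,\rho,s}$, including the same bookkeeping for condition (vi). The one point deserving care (which the paper handles by merging the cross terms into $N_{j,1,0}\tau_j\delta_j$ plus telescoped sums of $N_{j,\rho,1}$, exactly as your telescoping step indicates) is that the individual contributions of your cases (c) and (d) do not separately satisfy the invariant $s^j_1+\dots+s^j_{u_j}=u_j-\alpha^\iota_j$, so they must be paired before the normal form is restored.
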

\begin{proof}
Because of \eqref{eq:kernel}, we are reduced to proving that
\begin{equation}\label{eq:hermiteexpansionderivative}
\begin{split}
\left(\frac{\partial}{\partial \xi}\right)^\alphatwo \sum_{n \in \Z^\done} & m(n,\xi) \, \tilde h_{n}(y',\xi) \, \tilde h_n(x',\xi) \\
&= \sum_{\iota \in I_\alphatwo} \sum_{n \in \Z^\done}  \Theta_\iota(\xi) \, \partial^{\alphatwo^{\iota}} \mathcal{N}_\iota \, \tau^{\tilde\alphaone^{\iota}} \delta^{\alphaone^{\iota}} m(n,\xi) \, \tilde h_{n+2r^\iota}(y',\xi) \, \tilde h_n(x',\xi)
\end{split}
\end{equation}
where $I_\alphatwo$, $\alphatwo^\iota$, $\alphaone^\iota$, $\tilde\alphaone^\iota$, $r^\iota$, $\Theta_\iota$, $\mathcal{N}_\iota$ are as in the statement above.

This formula can be proved by induction on $|\alphatwo|_1$. For $|\alphatwo|_1 = 0$ it is trivially verified. For the inductive step, from well-known properties of the Hermite functions \cite[p.\ 2]{thangavelu_lectures_1993} we deduce
\[2 t h'_\ell(t) = a_\ell h_{\ell-2}(t) - a_{\ell+2} h_{\ell+2}(t) - h_\ell(t)\]
for all $\ell \in \Z$ and $t \in \R$. Correspondingly, for all $n,r \in \Z^\done$, $x',y' \in \R^\done$ and $\xi \in \R^\dtwo \setminus \{0\}$,
\[\begin{split}
\frac{\partial}{\partial \xi_\jtwo} \left[ \tilde h_{n+2r}(y',\xi) \, \tilde h_n(x',\xi) \right] 
= \frac{\xi_\jtwo}{4|\xi|^2} \sum_{\jone=1}^{\done} &\Bigl[ a_{n_\jone+2r_\jone}\tilde h_{n+2(r-e_\jone)}(y',\xi) \, \tilde h_n(x',\xi) \\
&- a_{n_\jone+2(r_\jone+1)}\tilde h_{n+2(r+e_\jone)}(y',\xi) \, \tilde h_n(x',\xi) \\
&+ a_{n_\jone} \tilde h_{n+2r}(y',\xi) \, \tilde h_{n-2e_\jone}(x',\xi) \\
&- a_{n_\jone+2}\tilde h_{n+2r}(y',\xi) \, \tilde h_{n+2e_\jone}(x',\xi) \Bigr].
\end{split}\]
Hence, for all smooth $f : \Z^\done \times \R^\dtwo \to \C$ compactly supported in $\Z^\done \times (\R^\dtwo \setminus \{0\})$,
\begin{equation}\label{eq:hermiteexpansionfirstderivative}
\begin{split}
\frac{\partial}{\partial \xi_\jtwo} \sum_{n \in \Z^\done} &f(n,\xi) \, \tilde h_{n+2r}(y',\xi) \, \tilde h_n(x',\xi)
= \sum_{n \in \Z^\done} \Bigl[ \partial_\jtwo f(n,\xi) \, \tilde h_{n+2r}(y',\xi) \\
&+ \frac{\xi_\jtwo}{4|\xi|^2} \sum_{\jone =1}^\done N_{\jone,1,0} \tau_\jone \delta_\jone f(n,\xi) \, \tilde h_{n+2(r+e_\jone)}(y',\xi) \\
&- \frac{\xi_\jtwo}{4|\xi|^2} \sum_{\jone =1}^\done \varepsilon_{r_\jone} \sum_{\rho = 1-(r_\jone)_-}^{(r_\jone)_+} N_{\jone,\rho+1,1} f(n,\xi) \, \tilde h_{n+2(r+e_\jone)}(y',\xi) \\
&+ \frac{\xi_\jtwo}{4|\xi|^2} \sum_{\jone =1}^\done \varepsilon_{r_\jone} \sum_{\rho = 1-(r_\jone)_-}^{(r_\jone)_+} N_{\jone,\rho,1} f(n,\xi) \, \tilde h_{n+2(r-e_\jone)}(y',\xi) \\
&+ \frac{\xi_\jtwo}{4|\xi|^2} \sum_{\jone =1}^\done N_{\jone,0,0} \delta_\jone f(n,\xi) \, \tilde h_{n+2(r-e_\jone)}(y',\xi) \Bigr] \, \tilde h_n(x',\xi),
\end{split}
\end{equation}
where, for all $\ell \in \Z$,
\[
\varepsilon_\ell = \begin{cases}
+1 &\text{if $\ell \geq 0$,}\\
-1 &\text{if $\ell < 0$}
\end{cases}
\qquad\text{and}\qquad
(\ell)_\pm = \max\{\pm \ell, 0\}.
\]
By taking the derivative $\partial/\partial \xi_k$ of both sides of \eqref{eq:hermiteexpansionderivative}, applying \eqref{eq:hermiteexpansionfirstderivative} to each summand in the right-hand side, and exploiting the ``commutation relations''
\[
\tau_\jone N_{l,\rho,s} = \begin{cases}
N_{l,\rho+1,s} \tau_\jone &\text{if $\jone = l$,}\\
N_{l,\rho,s} \tau_\jone &\text{if $\jone \neq l$,}
\end{cases}
\qquad
\delta_\jone N_{l,\rho,s} = \begin{cases}
N_{l,\rho,s+1} + N_{l,\rho-1,s} \delta_\jone, &\text{if $\jone = l$,}\\
N_{l,\rho,s} \delta_\jone, &\text{if $\jone \neq l$,}\\
\end{cases}
\]
one obtains the analogue of \eqref{eq:hermiteexpansionderivative} where $\beta$ is increased by $1$ in the $k$-th component.
\end{proof}

Plancherel's formula, together with the orthonormality of the Hermite functions and the finiteness of the index set $I_\alphatwo$, then yields the following estimate.

\begin{cor}\label{cor:derivativekernelformula}
Under the hypotheses of Proposition~\ref{prp:derivativekernelformula}, for all $\beta \in \N^\dtwo$ and almost all $y \in \Space$,
\begin{multline}\label{eq:l2derivativekernelformula}
\int_\Space \left| (x''-y'')^\alphatwo \, \Kern_{G(\vecL,\vecT)}(x,y)\right|^2 \,dx \\
\leq C_{\alphatwo} \int_{\R^\dtwo} \sum_{n \in \N^\done} \sum_{\iota \in I_\alphatwo} |\xi|^{2|\alphatwo^\iota|_1 - 2|\alphatwo|_1} \, \left| \mathcal{N}_\iota \tau^{\tilde\alphaone^\iota} \delta^{\alphaone^\iota} \partial^{\alphatwo^\iota} m(n,\xi) \right|^2 \, \tilde h_{n+2r^\iota}^2(y',\xi) \,d\xi.
\end{multline}
\end{cor}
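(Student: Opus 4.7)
The strategy is simply to compute the $L^2(\Space)$ norm of the right-hand side of the formula provided by Proposition~\ref{prp:derivativekernelformula} by handling the two factors $x''$ and $x'$ in turn. The $x''$-integration is dispatched via Plancherel, exploiting the oscillating factor $e^{i\langle \xi, x''-y''\rangle}$; the $x'$-integration is dispatched via the orthonormality of the rescaled Hermite functions; and the homogeneity of $\Theta_\iota$ supplies the factor $|\xi|^{2|\alphatwo^\iota|_1 - 2|\alphatwo|_1}$.

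More concretely, write the identity of Proposition~\ref{prp:derivativekernelformula} as
\[
(x''-y'')^\alphatwo \, \Kern_{G(\vecL,\vecT)}(x,y) = \int_{\R^\dtwo} F_y(x',\xi) \, e^{i\langle\xi,x''-y''\rangle}\,d\xi,
\]
with $F_y(x',\xi) = \sum_{\iota \in I_\alphatwo} \Theta_\iota(\xi) \sum_{n \in \Z^\done} \bigl[\partial^{\alphatwo^\iota} \mathcal{N}_\iota \tau^{\tilde\alphaone^\iota}\delta^{\alphaone^\iota} m(n,\xi)\bigr]\,\tilde h_{n+2r^\iota}(y',\xi)\,\tilde h_n(x',\xi)$. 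Plancherel's theorem in the variable $x''$ (applied to $F_y(x',\cdot)$, whose $x''$-dependence is an inverse partial Fourier transform up to a harmless translation by $y''$) gives
\[
\int_{\R^\dtwo} |(x''-y'')^\alphatwo \, \Kern_{G(\vecL,\vecT)}(x,y)|^2\,dx'' = (2\pi)^\dtwo \int_{\R^\dtwo} |F_y(x',\xi)|^2\,d\xi.
\]

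Next, since $I_\alphatwo$ is finite, the elementary bound $|\sum_{\iota} a_\iota|^2 \leq |I_\alphatwo|\sum_\iota |a_\iota|^2$ reduces the estimate to controlling, for each fixed $\iota$ and $\xi$, the $L^2(\R^\done)$ norm in $x'$ of a single series $\sum_{n \in \Z^\done} c_n(\xi)\,\tilde h_n(x',\xi)$. For every $\xi \in \R^\dtwo \setminus \{0\}$, the rescaled Hermite functions $\{\tilde h_n(\cdot,\xi)\}_{n \in \N^\done}$ form an orthonormal system in $L^2(\R^\done)$ (the rescaling factor $|\xi|^{\done/4}$ is chosen precisely to preserve orthonormality), and $\tilde h_n \equiv 0$ for $n \in \Z^\done \setminus \N^\done$; hence this $L^2$ norm squared equals $\sum_{n \in \N^\done} |c_n(\xi)|^2$.

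Finally, $\Theta_\iota$ being smooth on $\R^\dtwo\setminus\{0\}$ and homogeneous of degree $|\alphatwo^\iota|_1 - |\alphatwo|_1$ yields $|\Theta_\iota(\xi)|^2 \leq C_\iota |\xi|^{2|\alphatwo^\iota|_1 - 2|\alphatwo|_1}$; and since the operators $\mathcal{N}_\iota$, $\tau^{\tilde\alphaone^\iota}$, $\delta^{\alphaone^\iota}$ act only on the discrete variable $n$ with coefficients (namely the $a_\ell$) independent of $\xi$, they commute with $\partial^{\alphatwo^\iota}$ and may be reordered to match the statement. Integrating the resulting pointwise bound in $\xi$ gives~\eqref{eq:l2derivativekernelformula}. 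The main potential obstacle is really just bookkeeping: checking that all sum–integral interchanges are justified (which is immediate since $G$ is smooth and compactly supported, making $m$ essentially rapidly decaying in $n$), verifying the rescaled-Hermite orthonormality, and confirming the commutation with $\partial^{\alphatwo^\iota}$ — none of which requires genuine work beyond what Proposition~\ref{prp:derivativekernelformula} already provides.
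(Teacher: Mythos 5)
Your proposal is correct and follows exactly the route the paper indicates (the paper gives only the one-line justification ``Plancherel's formula, together with the orthonormality of the Hermite functions and the finiteness of the index set $I_\alphatwo$''): Plancherel in $x''$, Cauchy--Schwarz over the finite set $I_\alphatwo$, orthonormality of $\{\tilde h_n(\cdot,\xi)\}_{n\in\N^\done}$ in $x'$, and the bound $|\Theta_\iota(\xi)|^2\leq C_\iota|\xi|^{2|\alphatwo^\iota|_1-2|\alphatwo|_1}$ from homogeneity. All the points you flag as potential obstacles (convergence, since $m(\cdot,\xi)$ has finite support in $n$ and compact support in $\xi$ away from the origin; commutation of the discrete operators with $\partial^{\alphatwo^\iota}$) are handled correctly.
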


\section{From discrete to continuous}

The next few lemmata will be of use in clarifying the meaning of the various terms appearing in the right-hand side of \eqref{eq:l2derivativekernelformula}.

Note that for all $\xi \in \R^\dtwo$, $\tau_j f(\cdot,\xi)$, $\delta_j f(\cdot,\xi)$, $N_{j,\rho,s} f(\cdot,\xi)$ depend only on $f(\cdot,\xi)$. In other words, the operators $\tau_j$, $\delta_j$, $N_{j,\rho,s}$ and their compositions can be considered as operators on functions $\Z^\done \to \C$.

\begin{lem}\label{lem:discretecontinuous}
Let $f : \Z^\done \to \C$ have a smooth extension $\tilde f : \R^\done \to \C$, and let $\alphaone \in \N^\done$, $\tilde\alphaone \in \Z^\done$; then
\[\tau^{\tilde\alphaone} \delta^\alphaone f(n) = 2^{|\alphaone|_1} \int_{J_{\alphaone,\tilde\alphaone}} \partial^\alphaone \tilde f(n-s) \,d\nu_{\alphaone,\tilde\alphaone}(s)\]
for all $n \in \Z^\done$, where $J_{\alphaone,\tilde\alphaone} = \prod_{\jone=1}^\done \leftclosedint -2\tilde\alphaone_j, 2\alphaone_j  - 2\tilde\alphaone_j \rightclosedint$ and $\nu_{\alphaone,\tilde\alphaone}$ is a Borel probability measure on $J_{\alphaone,\tilde\alphaone}$. In particular
\[|\tau^{\tilde\alphaone} \delta^\alphaone f(n)|^2 \leq 2^{2|\alphaone|_1} \int_{J_{\alphaone,\tilde\alphaone}} |\partial^\alphaone \tilde f(n-s)|^2 \,d\nu_{\alphaone,\tilde\alphaone}(s)\]
and
\[|\tau^{\tilde\alphaone} \delta^\alphaone f(n)| \leq 2^{|\alphaone|_1} \sup_{s \in J_{\alphaone,\tilde\alphaone}} |\partial^\alphaone \tilde f(n-s)|\]
for all $n \in \Z^\done$.
\end{lem}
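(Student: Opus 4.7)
The identity will follow from the fundamental theorem of calculus applied componentwise, together with a change of variables to handle the shifts $\tau^{\tilde\alphaone}$. I expect no conceptual obstacle — the main thing is keeping the multiindex bookkeeping straight. The $L^2$ and $L^\infty$ estimates will then be immediate consequences of Jensen's inequality (equivalently, Cauchy--Schwarz against a probability measure) and the triangle inequality.

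\textbf{Step 1: represent a single $\delta_\jone$ as an integral.} By the fundamental theorem of calculus, for any smooth $\tilde g : \R^\done \to \C$,
\[
\delta_\jone \tilde g(n) = \tilde g(n) - \tilde g(n-2e_\jone) = \int_0^2 \partial_\jone \tilde g(n - s\, e_\jone) \,ds = 2 \int_0^2 \partial_\jone \tilde g(n - s\, e_\jone) \,\frac{ds}{2},
\]
so $\delta_\jone \tilde g(n)$ equals $2$ times the average of $\partial_\jone \tilde g$ against the uniform probability measure on $[0,2]$.

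\textbf{Step 2: iterate in each coordinate.} Since the $\delta_\jone$ commute with each other (they are simply differences of shifts), induction on $|\alphaone|_1$ using Step 1 yields, for $\alphaone \in \N^\done$,
\[
\delta^\alphaone \tilde f(n) = 2^{|\alphaone|_1} \int_{[0,2]^{|\alphaone|_1}} \partial^\alphaone \tilde f\Bigl(n - \sum_{\jone=1}^\done \sum_{i=1}^{\alphaone_\jone} s_{\jone,i}\, e_\jone\Bigr) \, d\mu(s),
\]
where $d\mu$ is the uniform probability measure on $[0,2]^{|\alphaone|_1}$. Pushing this measure forward along the map $s \mapsto \sum_{\jone,i} s_{\jone,i}\, e_\jone$ produces a probability measure $\tilde\nu_\alphaone$ supported on $\prod_\jone [0, 2\alphaone_\jone]$, giving
\[
\delta^\alphaone \tilde f(n) = 2^{|\alphaone|_1} \int_{\prod_\jone [0,2\alphaone_\jone]} \partial^\alphaone \tilde f(n - u)\, d\tilde\nu_\alphaone(u).
\]

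\textbf{Step 3: absorb the shift $\tau^{\tilde\alphaone}$.} By definition $\tau^{\tilde\alphaone} \delta^\alphaone f(n) = \delta^\alphaone f(n + 2\tilde\alphaone)$. Substituting into the formula above and changing variables $s = u - 2\tilde\alphaone$ translates the domain onto $J_{\alphaone,\tilde\alphaone} = \prod_\jone [-2\tilde\alphaone_\jone, 2\alphaone_\jone - 2\tilde\alphaone_\jone]$, and $\nu_{\alphaone,\tilde\alphaone}$ is defined to be the corresponding translate of $\tilde\nu_\alphaone$, which remains a Borel probability measure. This yields the claimed integral representation.

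\textbf{Step 4: derive the pointwise bounds.} Since $\nu_{\alphaone,\tilde\alphaone}$ is a probability measure, Jensen's inequality applied to $|\cdot|^2$ gives
\[
|\tau^{\tilde\alphaone} \delta^\alphaone f(n)|^2 \leq 2^{2|\alphaone|_1} \int_{J_{\alphaone,\tilde\alphaone}} |\partial^\alphaone \tilde f(n-s)|^2 \,d\nu_{\alphaone,\tilde\alphaone}(s),
\]
and the triangle inequality for integrals gives the sup bound, completing the proof.
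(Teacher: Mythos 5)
Your proof is correct and follows essentially the same route as the paper: iterate the fundamental theorem of calculus over each unit difference, push the resulting uniform measure forward to a probability measure on $J_{\alphaone,\tilde\alphaone}$ (absorbing the shift $\tau^{\tilde\alphaone}$ by translation), and conclude with Jensen/H\"older and the triangle inequality. The only cosmetic difference is that you parametrize each difference over $[0,2]$ where the paper uses $[0,1]$ and rescales.
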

\begin{proof}
Iterated application of the fundamental theorem of integral calculus gives
\[\delta^{\alphaone} f(n) = 2^{|\alphaone|_1} \int_{\leftclosedint 0,1\rightclosedint^{\alphaone_1}} \cdots \int_{\leftclosedint 0,1\rightclosedint^{\alphaone_\done}} \partial^{\alphaone} \tilde f(n_1-2|s_1|_1,\dots,n_\done-2|s_\done|_1) \,ds_1 \,\dots \,ds_\done\]
and the conclusion follows by taking as $\nu_{\alphaone,\tilde\alphaone}$ the push-forward of the uniform distribution on $\prod_{\jone=1}^\done \leftclosedint 0,1\rightclosedint^{\alphaone_\jone}$ via the map $(s_1,\dots,s_\done) \mapsto (2|s_1|_1-2\tilde\alphaone_1,\dots,2|s_\done|_1-2\tilde\alphaone_\done)$, and by H\"older's inequality.
\end{proof}

\begin{lem}\label{lem:coefficientestimate}
Let $\mathcal{N}$ be the product \eqref{eq:compositionproduct}, and let $f : \Z^\done \to \C$. Then
\begin{enumerate}
\item $\mathcal{N} f(n) = 0$ for all $n \in \Z^\done$ such that $n_j < 2 \max\{-\infty,1-\rho^\jone_1,\dots,1-\rho^\jone_{u_\jone}\}$ for at least one $\jone \in \{1,\dots,\done\}$, and
\item $|\mathcal{N} f(n)| \leq C_{\mathcal{N}} |f(n)| \prod_{\jone=1}^\done (2|n_\jone|+1)^{u_\jone-(s^\jone_1+\dots+s^\jone_{u_\jone})}$ for all $n \in \Z^\done$.
\end{enumerate}
\end{lem}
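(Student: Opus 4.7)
The plan is to observe that each $N_{\jone,\rho,s}$ acts on functions $\Z^\done \to \C$ as pointwise multiplication by a scalar depending only on the $\jone$-th coordinate. First I would define $c_{\jone,\rho,s}(\ell)$ recursively by $c_{\jone,\rho,0}(\ell) = a_{\ell+2\rho}$ and $c_{\jone,\rho,s}(\ell) = c_{\jone,\rho,s-1}(\ell) - c_{\jone,\rho-1,s-1}(\ell)$, verify by induction on $s$ that $N_{\jone,\rho,s} f(n) = c_{\jone,\rho,s}(n_\jone) f(n)$, and note that, since all these multiplications commute, the composition $\mathcal{N}$ is again multiplication by the scalar
\[ c(n) \defeq \prod_{\jone=1}^\done \prod_{l=1}^{u_\jone} c_{\jone,\rho^\jone_l,s^\jone_l}(n_\jone), \]
for which a short induction yields the explicit formula $c_{\jone,\rho,s}(\ell) = \sum_{k=0}^s (-1)^k \binom{s}{k} a_{\ell+2(\rho-k)}$.

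Claim (1) is then almost immediate: since $a_m$ vanishes for $m \leq 1$, each summand $a_{\ell+2(\rho-k)}$ in that formula is zero as soon as $\ell + 2(\rho-k) \leq 1$; the weakest such constraint (attained at $k = 0$) reads $\ell < 2(1-\rho)$. Thus $c_{\jone,\rho,s}(\ell) = 0$ whenever $\ell < 2(1-\rho)$, and taking the tightest such threshold over $l \in \{1,\dots,u_\jone\}$ (with the convention $\max\emptyset = -\infty$ when $u_\jone = 0$) produces a vanishing factor in $c(n)$, forcing $\mathcal{N} f(n) = 0$.

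For claim (2) the central ingredient is the univariate bound
\[ |c_{\jone,\rho,s}(\ell)| \leq C_{s,\rho} (|\ell|+1)^{1-s}, \qquad \ell \in \Z. \]
Multiplying this across $\jone$ and $l$, using the elementary comparison $|n_\jone|+1 \asymp 2|n_\jone|+1$, and absorbing the constants into a single $C_\mathcal{N}$ (which is legitimate since $\rho^\jone_l$ is a priori bounded by the constraints in (vi)) yields exactly the inequality in (2). To establish the univariate bound I would recognise $c_{\jone,\rho,s}(\ell)$ as the $s$-fold backward finite difference, of step $2$, of the smooth function $b(t) = \sqrt{t(t-1)}$ evaluated at $t = \ell + 2\rho$: in the regime $\ell + 2(\rho-s) \geq 2$ the standard representation
\[ c_{\jone,\rho,s}(\ell) = \int_{[0,2]^s} b^{(s)}(\ell + 2\rho - u_1 - \dots - u_s) \, du_1 \cdots du_s \]
reduces the task to the derivative estimate $|b^{(s)}(t)| \leq C_s t^{1-s}$ for $t \geq 2$, while in the complementary regime $\ell < 2(s-\rho)+2$ the value $|\ell|$ is bounded by a constant depending only on $s$ and $\rho$ and the inequality becomes trivial.

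The main obstacle is therefore the derivative estimate $|b^{(s)}(t)| \leq C_s t^{1-s}$. I would settle it by writing $b(t) = t\sqrt{1 - 1/t}$ and expanding via the binomial series about $t = \infty$, obtaining $b(t) = t - \tfrac{1}{2} - \tfrac{1}{8t} - \tfrac{1}{16 t^2} - \cdots$. For $s = 0$ one simply has $a_\ell \leq \ell$; for $s = 1$ the series gives $b'(t) = 1 + O(t^{-2})$, bounded as required; and for $s \geq 2$ both the linear and the constant terms vanish under differentiation, leaving a surviving contribution of order $t^{-s-1}$, which is dominated by $t^{1-s}$. Standard tail estimates for the binomial remainder close the argument, and inserting this into the integral representation completes the proof of (2).
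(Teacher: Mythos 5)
Your argument is correct and follows essentially the same route as the paper's: reduce to a single factor $N_{\jone,\rho,s}$, observe that it is multiplication by an iterated (step-$2$) difference of $\ell \mapsto a_{\ell+2\rho}$, deduce (1) from the vanishing of $a_\ell$ for $\ell < 2$, and deduce (2) by passing from the finite difference to the $s$-th derivative of a smooth extension of $\sqrt{\ell(\ell-1)}$ bounded by $C_s t^{1-s}$. The only differences are cosmetic: the paper routes the difference-to-derivative step through its Lemma~\ref{lem:discretecontinuous} and bounds $b^{(s)}$ by Leibniz applied to $\sqrt{t}\,\sqrt{t-1}$, whereas you rederive the integral representation directly and use the expansion of $t\sqrt{1-1/t}$ at infinity. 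One sentence needs repair: in the regime $\ell + 2(\rho-s) < 2$ you claim $|\ell|$ is bounded by a constant depending on $s,\rho$, but there $\ell$ is only bounded \emph{above}, and for $\ell \to -\infty$ with $s \geq 2$ the right-hand side $(|\ell|+1)^{1-s}$ tends to $0$, so the bound is not "trivial" as stated. The fix is immediate from your own part (1): $c_{\jone,\rho,s}(\ell)=0$ for $\ell < 2(1-\rho)$, so only the finitely many integers $2(1-\rho) \leq \ell < 2(1-\rho+s)$ remain, and on that set $|\ell|$ is genuinely bounded.
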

\begin{proof}
It is sufficient to prove the conclusion in the case where the product $\mathcal{N}$ is made of a single factor $N_{\jone,\rho,s}$.

$N_{\jone,\rho,s}$ is a multiplication operator, with multiplier $\tau_\jone^{\rho} \delta_\jone^{s} w_\jone$, where $w_\jone(n) = a_{n_\jone}$. Since $a_\ell = 0$ when $\ell < 2$, inductively we obtain $\tau_\jone^{\rho} \delta_\jone^{s} w_\jone(n) = \delta_\jone^{s} w(n+2\rho e_j) = 0$ when $n_\jone < 2(1-\rho)$, and part (1) follows.

The function $w_\jone : \Z^\done \to \C$ can be extended to a smooth function $\tilde w_\jone : \R^\done \to \C$ such that $\tilde w_\jone(t) = \sqrt{t_\jone(t_\jone-1)}$ if $t_\jone > 3/2$, say, and $\tilde w(t) = 0$ if $t_\jone \leq 1$. By Leibniz' rule, if $t_\jone > 3/2$, then
\[\partial_\jone^s \tilde w(t) = \sum_{v = 0}^s c_{s,v} \, t_\jone^{1/2-v} (t_\jone-1)^{1/2-(s-v)}\]
for some constants $c_{s,v} \in \R$, and in particular $|\partial_\jone^s \tilde w(t)| \leq C_s t_\jone^{1-s}$ if $t_\jone > 3/2$. Lemma~\ref{lem:discretecontinuous} then gives that
\[|\tau_\jone^{\rho} \delta_\jone^{s} w_\jone(n)| \leq C_{s} \sup_{2\rho - 2s \leq \theta \leq 2\rho} (n_\jone + \theta)^{1-s} \leq C_{\rho,s} (2|n_\jone|+1)^{1-s}\]
for all $n$ with $n_j \geq 2(1-\rho+s)$. Possibly by increasing the constant, the inequality $|\tau_\jone^{\rho} \delta_\jone^{s} w(n)| \leq C_{\rho,s} (2|n_\jone|+1)^{1-s}$ extends to all $n \in \Z^\done$, and part (2) follows.
\end{proof}

For all $d \in \N \setminus \{0\}$, $\ell \in \N$, $u \in \R^d$, set
\[H_{d,\ell}(u) = \sum_{\substack{n \in \N^d \\ |n|_1 = \ell}} h_{n_1}^2(u_1) \cdots h_{n_d}^2(u_d).\]
For the reader's convenience, we rewrite here the known bounds for the functions $H_{d,\ell}$ that will be used in the following (see \cite[Lemma~8]{martini_grushin} and references therein).
\begin{lem}
Let $d \in \N \setminus \{0\}$ and set $[\ell] = 2\ell + d$.
If $d = 1$ then, for all $\ell \in \N$,
\begin{equation}\label{eq:muckenhoupt}
H_{1,\ell}(u) \leq \begin{cases}
C([\ell]^{1/3} + |u^2-[\ell]|)^{-1/2} &\text{for all $u \in \R$,}\\
C\exp(-cu^2) &\text{when $u^2 \geq 2[\ell]$.}
\end{cases}
\end{equation}
If $d \geq 2$ then, for all $\ell \in \N$,
\begin{equation}\label{eq:higherbounds}
H_{d,\ell}(u) \leq \begin{cases}
C_d [\ell]^{d/2-1} &\text{for all $u \in \R^d$,}\\
C_d \exp(-c_d |u|_\infty^2) &\text{when $|u|_\infty^2 \geq 2[\ell]$,}
\end{cases}
\end{equation}
where $|u|_\infty = \max\{|u_1|,\dots,|u_d|\}$.
\end{lem}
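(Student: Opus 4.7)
Both estimates are classical, so in a complete proof I would simply refer to standard Hermite-function bounds; below I sketch the underlying arguments.

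The one-dimensional bound \eqref{eq:muckenhoupt} is the well-known Muckenhoupt estimate on $h_\ell^2$: the first inequality follows from Plancherel--Rotach asymptotics in the classically allowed region, while the exponential decay for $u^2 \geq 2[\ell]$ is a WKB-type estimate in the forbidden region, obtainable by iterating the three-term recurrence $2u h_\ell(u)=\sqrt{2\ell}\,h_{\ell-1}(u)+\sqrt{2(\ell+1)}\,h_{\ell+1}(u)$ together with the trivial bound $\|h_\ell\|_\infty \leq C$.

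For $d \geq 2$, the exponential decay in \eqref{eq:higherbounds} reduces directly to the one-dimensional case. If $u_i^2 \geq 2[\ell]$ for some index~$i$, then for every $n \in \N^d$ with $|n|_1 = \ell$ one has $n_i \leq \ell \leq [\ell]/2$, hence $u_i^2 \geq 2[n_i]$, and the $d=1$ bound gives $h_{n_i}^2(u_i) \leq C e^{-c u_i^2}$. The remaining $d-1$ factors are bounded by a universal constant, and summation over the $O(\ell^{d-1})$ admissible multi-indices contributes at most a polynomial-in-$\ell$ loss, which is absorbed into the exponential (using $u_i^2 \geq 2\ell$) after slightly decreasing~$c_d$.

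The uniform bound $H_{d,\ell}(u) \leq C_d[\ell]^{d/2-1}$ is the main obstacle. I would exploit the fact that $H_{d,\ell}(u)$ is the diagonal of the reproducing kernel of the $\ell$-th eigenspace of the harmonic oscillator $-\Delta+|u|^2$ on $\R^d$; since this eigenspace is invariant under $O(d)$, the function $H_{d,\ell}$ is radial and admits a closed form in terms of Laguerre functions, from which the sharp bound $[\ell]^{d/2-1}$ follows via the standard uniform estimates on Laguerre functions (as presented in \cite{thangavelu_lectures_1993} and in \cite[Lemma~8]{martini_grushin}). I expect this step to be the delicate one: a naive contour integration of the Mehler generating function
\[\sum_{\ell \geq 0} r^\ell H_{d,\ell}(u) = \pi^{-d/2}(1-r^2)^{-d/2}\exp\!\left(-|u|^2\tfrac{1-r}{1+r}\right)\]
loses an extra factor of $[\ell]^{1/2}$, so one must rely either on the Laguerre closed form with its refined uniform bounds or on a more careful spectral argument in order to recover the exact exponent $d/2-1$.
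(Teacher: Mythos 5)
The paper gives no proof of this lemma: it is stated as a collection of known bounds, with a bare citation to \cite[Lemma~8]{martini_grushin} (and, through it, to the standard literature on Hermite and Laguerre functions). Your proposal does exactly the same thing -- defer to the classical Muckenhoupt/Plancherel--Rotach estimates for $d=1$ and to the Laguerre closed form of the radial projection kernel for $d\geq 2$ -- and your sketches of the reduction steps (in particular the $d\geq 2$ exponential bound via a single coordinate) are correct, so this matches the paper's treatment.
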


The following lemma is a refined version of \cite[Lemma 9]{martini_grushin}.

\begin{lem}\label{lem:newhermiteestimate}
Let $d \in \N \setminus \{0\}$ and set $[\ell] = 2\ell + d$. Let $(b_\ell)_{\ell \in \N}$ be a sequence in $\leftopenint 0,\infty \rightopenint$ such that, for some $\kappa \in \leftclosedint 1,\infty \rightopenint$,
\[\kappa^{-1} \leq b_\ell / [\ell] \leq \kappa\]
for all $\ell \in \N$. In the case $d=1$, suppose further that
\[|b_\ell - [\ell]| \leq \kappa [\ell]^{2/3}\]
for all $\ell \in \N$. Then, for all $x \in \leftopenint 0,\infty \rightopenint$ and $u \in \R^d$,
\begin{equation}\label{eq:hermite}
\sum_{\substack{\ell \in \N \\ [\ell] \leq x}} H_{d,\ell}(b_\ell^{-1/2} u) \leq C_{d,\kappa} 
\begin{cases}
x^{d/2} &\text{in any case,}\\
\exp(-|u|^2/(c_{d,\kappa}\, x)) &\text{if $|u| \geq c_{d,\kappa}\, x$,}
\end{cases}
\end{equation}
for some $c_{d,\kappa} \in \leftclosedint 1,\infty \rightopenint$.
\end{lem}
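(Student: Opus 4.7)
Both inequalities will follow from the pointwise bounds \eqref{eq:muckenhoupt} and \eqref{eq:higherbounds} of the preceding lemma, after the substitution $u\mapsto b_\ell^{-1/2}u$. The equivalence $\kappa^{-1}[\ell]\leq b_\ell\leq\kappa[\ell]$ makes this substitution essentially harmless when $d\geq 2$; the one-dimensional case is more delicate, and the refined hypothesis $|b_\ell-[\ell]|\leq\kappa[\ell]^{2/3}$ will play an essential role. The main obstacle of the proof is in fact this one-dimensional analysis: the above hypothesis is precisely the sharp threshold at which the discrepancy between $u^2/b_\ell-[\ell]$ and $u^2/[\ell]-[\ell]$ remains compatible with the $[\ell]^{1/3}$ smoothing built into the Muckenhoupt bound.

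To prove the uniform estimate for $d\geq 2$, the first bound in \eqref{eq:higherbounds} gives $H_{d,\ell}(b_\ell^{-1/2}u)\leq C_d[\ell]^{d/2-1}$ for every $\ell$, and the conclusion follows by comparing $\sum_{[\ell]\leq x}[\ell]^{d/2-1}$ with $\int_0^x t^{d/2-1}\,dt$. For $d=1$, the Muckenhoupt bound \eqref{eq:muckenhoupt} yields
\[
H_{1,\ell}(b_\ell^{-1/2}u)\leq C\bigl([\ell]^{1/3}+|u^2/b_\ell-[\ell]|\bigr)^{-1/2}.
\]
Writing $u^2/b_\ell-[\ell]=(u^2-b_\ell[\ell])/b_\ell$ and using $|b_\ell[\ell]-[\ell]^2|=[\ell]\,|b_\ell-[\ell]|\leq\kappa[\ell]^{5/3}$, I would split the range $[\ell]\leq x$ into a ``resonant'' set, where $|u^2-[\ell]^2|\leq 2\kappa[\ell]^{5/3}$, and its complement. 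The resonant set is contained in an interval of $[\ell]$-values of length $\lesssim\max\{|u|^{2/3},1\}$ centered at $[\ell]\sim|u|$, and each summand there is at most $[\ell]^{-1/6}\lesssim|u|^{-1/6}$, giving a total of $\lesssim|u|^{1/2}\leq x^{1/2}$. On the non-resonant set, the reverse triangle inequality yields $|u^2-b_\ell[\ell]|\geq|u^2-[\ell]^2|/2$, and hence each summand is bounded by $(2\kappa[\ell])^{1/2}/|u^2-[\ell]^2|^{1/2}$; dividing into $[\ell]\leq|u|/2$, $[\ell]\geq 2|u|$, and the non-resonant portion of $|u|/2<[\ell]<2|u|$, and comparing each partial sum to an integral, again yields $\lesssim x^{1/2}$.

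For the exponential estimate, fix $c_{d,\kappa}\geq\sqrt{2d\kappa}$. Given $|u|\geq c_{d,\kappa}x$ and any $\ell$ with $[\ell]\leq x$, one has $b_\ell\leq\kappa x$, whence
\[
|b_\ell^{-1/2}u|_\infty^2\geq\frac{|u|^2}{d\,b_\ell}\geq\frac{c_{d,\kappa}^2\,x}{d\kappa}\geq 2x\geq 2[\ell],
\]
so the exponential clauses of \eqref{eq:muckenhoupt} and \eqref{eq:higherbounds} apply and give $H_{d,\ell}(b_\ell^{-1/2}u)\leq C_d\exp(-c_d|u|^2/(d\kappa x))$. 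Summing over the at most $x/2$ admissible indices yields $\leq C_d\,x\exp(-c_d|u|^2/(d\kappa x))$; splitting the exponent into two equal halves and using $|u|^2\geq c_{d,\kappa}^2x^2$ in one of them produces a factor $\exp(-c_dc_{d,\kappa}^2 x/(2d\kappa))$ that absorbs the polynomial $x$. The claimed bound follows after possibly enlarging $c_{d,\kappa}$ and absorbing multiplicative constants into $C_{d,\kappa}$.
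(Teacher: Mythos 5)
Your proof is correct and follows essentially the same strategy as the paper's: the $d\geq 2$ case and the exponential regime are handled identically, and for $d=1$ your resonant/non-resonant splitting (via $|u^2-[\ell]^2|\lessgtr 2\kappa[\ell]^{5/3}$) is just a symmetrized rewriting of the paper's decomposition around the turning point $[\ell]\approx|u|$, with the same counting of $O(|u|^{2/3})$ resonant terms of size $O([\ell]^{-1/6})$ and the same integral comparison off resonance. The only cosmetic imprecision is the final step ``$|u|^{1/2}\leq x^{1/2}$'' in the resonant case, which should be justified by noting that a nonempty intersection of the resonant set with $\{[\ell]\leq x\}$ forces $|u|\leq C_\kappa x$.
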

\begin{proof}
We may assume that $x \geq 1$, otherwise the left-hand side of \eqref{eq:hermite} vanishes.

In order to exploit the bounds \eqref{eq:muckenhoupt} and \eqref{eq:higherbounds}, we consider several cases.

First of all, in the case $|u|_\infty \geq x \sqrt{2 \kappa}$, if $[\ell] \leq x$, then $b_\ell \leq \kappa x$, hence
\[|b_\ell^{-1/2} u|_\infty^2 \geq  |u|_\infty^2 / (\kappa x) \geq 2 x \geq 2[\ell],\]
and therefore
\begin{equation}\label{eq:partialhermite}
\begin{split}
\sum_{[\ell] \leq x} H_{d,\ell}(b_\ell^{-1/2} u) &\leq C_d x \exp(- c_d |u|_\infty^2 / (\kappa x)) \\
&\leq C_d \exp(- c_d |u|_\infty^2 / (2\kappa x)) \, \sup_{t \geq 1} (t \exp(-c_d t)).
\end{split}
\end{equation}
Thus the second inequality in \eqref{eq:hermite} is proved (by a suitable choice of $c_{d,\kappa}$).

In the case $d > 1$, the first inequality in \eqref{eq:hermite} is immediately proved because
\[\sum_{[\ell] \leq x} H_{d,\ell}(b_\ell^{-1/2} u) \leq C_d \sum_{[\ell] \leq x} [\ell]^{d/2-1} \leq C_d \, x^{d/2}.\]
In the case $d = 1$, instead, we need to split the sum in \eqref{eq:hermite} in several parts:
\[
\sum_{\substack{[\ell] \leq x}} H_{1,\ell}(b_\ell^{-1/2} u) = \sum_{\substack{[\ell] \leq x \\ [\ell] \leq |u|/\sqrt{2\kappa}}} + \sum_{\substack{[\ell] \leq x \\ |u|/\sqrt{2\kappa} < [\ell] < |u| \sqrt{2\kappa}}} + \sum_{\substack{[\ell] \leq x \\ [\ell] \geq |u| \sqrt{2\kappa}}}.
\]

The first and the last part are the easiest to control. In fact, the part where $[\ell] \leq |u|/\sqrt{2\kappa}$ is controlled by a constant because of \eqref{eq:partialhermite}. Moreover, in the part where $|u| \sqrt{2\kappa} \leq [\ell] \leq x$, we have $u^2/b_\ell \leq [\ell]/2$, hence
\[\sum_{|u| \sqrt{2\kappa} \leq [\ell] \leq x} H_{1,\ell}(b_\ell^{-1/2} u) \leq C \sum_{[\ell] \leq x} [\ell]^{-1/2} \leq C \, x^{1/2}.\]

The middle part instead requires a further splitting:
\[
\sum_{\substack{[\ell] \leq x \\ |u|/\sqrt{2\kappa} < [\ell] < |u| \sqrt{2\kappa}}} = \sum_{\substack{[\ell] \leq x \\ |u|/\sqrt{2\kappa} < [\ell] \\ [\ell] \leq |u|-\kappa [\ell]^{2/3}}} + \sum_{\substack{[\ell] \leq x \\ |u|/\sqrt{2\kappa} < [\ell] < |u| \sqrt{2\kappa} \\ |u|-\kappa [\ell]^{2/3} < [\ell] < |u|+\kappa [\ell]^{2/3}}} + \sum_{\substack{[\ell] \leq x \\ |u|+\kappa [\ell]^{2/3} \leq [\ell] \\ [\ell] < |u|\sqrt{2\kappa}}}.
\]

In the part where $|u|/\sqrt{2\kappa} < [\ell] \leq |u|-\kappa [\ell]^{2/3}$, we have $|u| \geq 1+\kappa$ and
\[[\ell] \leq |u| - 1, \qquad b_\ell \leq |u|, \qquad 1/\sqrt{2\kappa} \leq [\ell]/|u| < 1,\]
hence
\[\left|\frac{u^2}{b_\ell} - [\ell]\right| \geq |u| \left(1-\frac{[\ell]}{|u|}\right),\]
so this part of the sum is majorized by
\[
C_{\kappa} \, \frac{x^{1/2}}{|u|} \sum_{|u|/\sqrt{2\kappa} < [\ell] \leq |u|-\kappa [\ell]^{2/3}} \left(1-\frac{[\ell]}{|u|}\right)^{-1/2} 
\leq C_{\kappa} \, x^{1/2} \int_{1/\sqrt{2\kappa}}^1 (1-t)^{-1/2} \,dt,
\]
and the last integral is finite.

In the part where $|u|+\kappa [\ell]^{2/3} \leq [\ell] < |u|\sqrt{2\kappa}$, we have $|u| \geq 1/\sqrt{2\kappa}$ and
\[[\ell] \geq |u| + 1, \qquad b_\ell \geq |u|, \qquad 1 < [\ell]/|u| \leq \sqrt{2\kappa},\]
hence
\[\left|\frac{u^2}{b_\ell} - [\ell]\right| \geq |u| \left(\frac{[\ell]}{|u|}-1\right),\]
so this part of the sum is majorized by
\[
C \frac{x^{1/2}}{|u|} \sum_{|u|+\kappa [\ell]^{2/3} \leq [\ell] < |u|\sqrt{2\kappa}} \left(\frac{[\ell]}{|u|}-1\right)^{-1/2} 
\leq C \, x^{1/2} \int_{1}^{\sqrt{2\kappa}} (t-1)^{-1/2} \,dt,
\]
and the last integral is finite.

In the part where $|u|/\sqrt{2\kappa} < [\ell] < |u| \sqrt{2\kappa}$ and $|u|-\kappa [\ell]^{2/3} < [\ell] < |u|+\kappa [\ell]^{2/3}$ there are at most $\kappa (2\kappa)^{1/3} |u|^{2/3}$ summands, and moreover $|u| \leq x \sqrt{2\kappa}$, hence this part of the sum is majorized by
\[C_\kappa |u|^{2/3} |u|^{-1/6} \leq C_\kappa \, x^{1/2},\]
and we are done.
\end{proof}

We may now give a more explicit form to the right-hand side of \eqref{eq:l2derivativekernelformula}, in terms of a Sobolev norm of the multiplier, in the case we restrict to the functional calculus for the Grushin operator $L$ alone. In order to avoid divergent series, however, it is convenient at first to truncate the multiplier along the spectrum of $\vecT$.

\begin{lem}\label{lem:weightedplancherel}
Let $\chi \in C^\infty_c(\leftopenint 0,\infty \rightopenint)$ be such that $\supp \chi \subseteq [1/2,2]$.
Let $F : \R \to \C$ be smooth and such that $\supp f \subseteq K$ for some compact $K \subseteq \leftopenint 0,\infty\rightopenint$. For all $r \in \leftclosedint0,\infty\rightopenint$ and $M \in \leftclosedint 1,\infty \rightopenint$, if $F_M : \R \times \R^\dtwo \to \C$ is defined by
\[F_M(\lambda,\xi) = F(\lambda) \, \chi(\lambda/(M |\xi|)),\]
then
\begin{multline*}
\int_\Space \left| |x''-y''|^r \, \Kern_{F_M(L,\vecT)}(x,y)\right|^2 \,dx \\
\leq C_{\chi,K,r} M^{2r-\dtwo} \left( \chr_{\leftclosedint0,c_{K,r} \rightclosedint}(|y'|/M) + e^{-|y'|} \right) \|F\|^2_{W_2^r}
\end{multline*}
for almost all $y \in \Space$.
\end{lem}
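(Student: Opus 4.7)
The plan is to apply Corollary~\ref{cor:derivativekernelformula} to a suitable joint functional-calculus lift of $F_M$ and then to control the resulting expression via Lemmas~\ref{lem:discretecontinuous}, \ref{lem:coefficientestimate} and~\ref{lem:newhermiteestimate}. First, by complex interpolation in $r$ (between the trivial Plancherel case $r=0$ and $r=\lceil r\rceil$, with the Sobolev scale $W_2^{\Re z}$ on the spectral side), it suffices to prove the stated inequality for $r\in\N$. For such $r$ the multinomial identity $|x''-y''|^{2r}\leq C_r\sum_{|\alphatwo|_1=r}(x''-y'')^{2\alphatwo}$ reduces the problem to bounding $\|(x''-y'')^{\alphatwo}\Kern_{F_M(L,\vecT)}\|_2^2$ for a single $\alphatwo\in\N^\dtwo$ of length $r$. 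Applying Corollary~\ref{cor:derivativekernelformula} to $G(\lambda_1,\dots,\lambda_\done,\xi)=F_M(\lambda_1+\dots+\lambda_\done,\xi)$ yields, via \eqref{eq:reparametrization}, the reparametrization $m(n,\xi)=F(|\xi|[n])\,\chi([n]/M)\,\chr_{\N^\done}(n)$ with $[n]\defeq 2|n|_1+\done$. The crucial simplification is that on the spectral parametrization $\lambda=|\xi|[n]$ the cutoff $\chi(\lambda/(M|\xi|))$ collapses to $\chi([n]/M)$, which no longer depends on $\xi$. On the support of $m$ we then have $[n]\sim M$ and, since $|\xi|[n]\in K$, also $|\xi|\sim_K 1/M$.

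The next step is to bound the summand $\mathcal{N}_\iota\,\tau^{\tilde\alphaone^\iota}\delta^{\alphaone^\iota}\partial_\xi^{\alphatwo^\iota}m$ appearing in \eqref{eq:l2derivativekernelformula}. Lemma~\ref{lem:discretecontinuous} replaces $\tau^{\tilde\alphaone^\iota}\delta^{\alphaone^\iota}$ by $\partial_n^{\alphaone^\iota}$ on a smooth extension, integrated against a probability measure, while Lemma~\ref{lem:coefficientestimate}(2), combined with the identity $s^\jone_1+\cdots+s^\jone_{u_\jone}=u_\jone-\alphaone^\iota_\jone$ from Proposition~\ref{prp:derivativekernelformula}(vi), gives $|\mathcal{N}_\iota g(n)|\lesssim|g(n)|\prod_\jone(|n_\jone|+1)^{\alphaone^\iota_\jone}\lesssim M^{|\alphaone^\iota|_1}|g(n)|$ on the support of $m$. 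A direct chain-rule computation, exploiting that $\tilde m$ depends on $n$ only through the scalar $[n]$ and that partial derivatives of $|\xi|$ of order $p$ are $\xi$-homogeneous of degree $1-p$, yields
\[
|\partial_n^{\alphaone^\iota}\partial_\xi^{\alphatwo^\iota}\tilde m(n,\xi)|\lesssim\sum_{\substack{k+l=|\alphaone^\iota|_1\\0\leq q\leq|\alphatwo^\iota|_1}} M^{-l}\,|\xi|^{k-|\alphatwo^\iota|_1+q}\,[n]^q\,|F^{(k+q)}(|\xi|[n])|.
\]
Substituting $|\xi|\sim 1/M$ and $[n]\sim M$, and combining with the factor $M^{2|\alphaone^\iota|_1}$ produced by $\mathcal{N}_\iota$ and the explicit factor $|\xi|^{2|\alphatwo^\iota|_1-2r}$ present in \eqref{eq:l2derivativekernelformula}, every exponent collapses and the net scaling of each summand equals exactly $M^{2r}$, uniformly in $(\alphaone^\iota,\tilde\alphaone^\iota,\alphatwo^\iota,\iota)$.

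What remains is to estimate the residual expression
\[
M^{2r}\sum_{q\leq r}\sum_\iota\int_{\R^\dtwo}\sum_{[n]\sim M}|F^{(q)}(|\xi|[n])|^2\,\tilde h_{n+2r^\iota}^2(y',\xi)\,d\xi.
\]
Its integrand depends on $\xi$ only through $|\xi|$, so we pass to polar coordinates and substitute $\lambda=|\xi|[n]$, converting the $\xi$-integration into integration on $\lambda\in K$ against $\lambda^{\dtwo-1}[n]^{-\dtwo}\,d\lambda$ and the spherical area. After a harmless shift of the summation index by $2r^\iota$ (controlled by $|r^\iota|_1\leq r$), Lemma~\ref{lem:newhermiteestimate} applied with $d=\done$, $b_\ell=[\ell]$, $u=\lambda^{1/2}y'$ and $x\sim M$ bounds the resulting Hermite sum by $C_\done M^{\done/2}$ unconditionally, and by $\exp(-\lambda|y'|^2/(cM))$ whenever $\lambda^{1/2}|y'|\geq c_\done M$; since $\lambda\in K$ is bounded away from $0$ and $\infty$, the latter decay is majorized by $e^{-|y'|}$ as soon as $|y'|\geq c_{K,r}M$. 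Collecting the powers of $M$ yields the desired prefactor $M^{2r-\dtwo}(\chr_{\leftclosedint 0,c_{K,r}\rightclosedint}(|y'|/M)+e^{-|y'|})$, and $\int_K|F^{(q)}(\lambda)|^2\lambda^{\dtwo-1}\,d\lambda\leq C_K\|F\|_{W_2^r}^2$ for $q\leq r$ completes the estimate. The main obstacle is the tight bookkeeping in the second step: one has to verify that the many scaling factors arising from $\mathcal{N}_\iota$, from the homogeneous symbols $\Theta_\iota$ and from the chain rule on $F(|\xi|[n])$ and $|\xi|^k$ combine uniformly in all auxiliary indices to produce the single clean factor $M^{2r}$, which is precisely the cancellation responsible for replacing the homogeneous dimension $\done+2\dtwo$ by the topological dimension $\done+\dtwo$ in the final bound.
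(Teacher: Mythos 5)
Your plan follows the paper's own proof essentially step for step: reduction to integer $r$ by interpolation, application of Corollary~\ref{cor:derivativekernelformula} with the reparametrization $m(n,\xi)=F(|\xi|\langle n\rangle)\chi(\langle n\rangle/M)$, conversion of the discrete operators via Lemmas~\ref{lem:discretecontinuous} and~\ref{lem:coefficientestimate}, the chain-rule expansion of the smooth extension $\tilde m$, and finally polar coordinates in $\xi$ together with Lemma~\ref{lem:newhermiteestimate} for the Hermite sums. Your scaling bookkeeping (net factor $M^{2r}$ from the multiplier side, $M^{-\dtwo}$ from the change of variables $\lambda=|\xi|\langle n\rangle$, and cancellation of the $M^{\pm\done/2}$ factors between the Hermite normalization and the bound on $H_{\done,\ell}$) reproduces the paper's computation, so the approach and the resulting estimate are the same.
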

\begin{proof}
Without loss of generality, we can restrict to the case $r \in \N$, the remaining values of $r$ being recovered by interpolation. It is then sufficient to prove
\begin{multline*}
\int_\Space \left| (x''-y'')^\alphatwo \, \Kern_{F_M(L,\vecT)}(x,y)\right|^2 \,dx \\
\leq C_{\chi,K,\alphatwo} M^{2|\alphatwo|_1-\dtwo} \left( \chr_{\leftclosedint0,c_{K,\beta} \rightclosedint}(|y'|/M) + e^{-|y'|} \right) \|F\|^2_{W_2^{|\alphatwo|_1}}
\end{multline*}
for all $\alphatwo \in \N^\dtwo$ and almost all $y \in \Space$.

Set $\langle t \rangle = |2t+\vecONE|_1 = 2|t|_1 + \done$ for all $t \in \R^\done$. An estimate for the left-hand side of the previous inequality is given by Corollary~\ref{cor:derivativekernelformula}, by taking $m(n,\xi) = F(|\xi| \langle n \rangle) \, \chi(\langle n \rangle / M)$ for $n \in \N^\done$ and $m(n,\xi) = 0$ for $n \in \Z^\done \setminus \N^\done$. This estimate, combined with Lemma~\ref{lem:coefficientestimate}, gives
\begin{multline*}
\int_\Space \left| (x''-y'')^\alphatwo \, \Kern_{F_M(L,\vecT)}(x,y)\right|^2 \,dx 
\leq C_{\alphatwo}  \sum_{\iota \in I_\alphatwo} \int_{\R^\dtwo} \sum_{n \geq \lbN^\iota} |\xi|^{2 |\alphatwo^\iota|_1-2 |\alphatwo|_1} \\
\times (2n_1+1)^{2\alphaone^\iota_1} \cdots (2n_\done+1)^{2\alphaone^\iota_\done} \left| \tau^{\tilde\alphaone^\iota} \delta^{\alphaone^\iota} \partial^{\alphatwo^\iota} m(n,\xi) \right|^2 \, \tilde h^2_{n+2r^\iota}(y',\xi) \,d\xi,
\end{multline*}
where $\lbN^\iota \defeq (\lbN^\iota_1,\dots,\lbN^\iota_\done)$ and $\lbN^\iota_\jone \defeq 2\max\{0,1-\rho^\jone_1,\dots,1-\rho^\jone_{u_\jone}\} \geq 2(\alphaone^\iota_\jone-\tilde\alphaone^\iota_\jone)$ for all $\jone \in \{1,\dots,\done\}$. If $\tilde m$ is a smooth extension of $m$, then Lemma~\ref{lem:discretecontinuous} gives
\begin{multline*}
\int_\Space \left| (x''-y'')^\alphatwo \, \Kern_{F_M(L,\vecT)}(x,y)\right|^2 \,dx 
\leq C_{\alphatwo}  \sum_{\iota \in I_\alphatwo} \int_{J_\iota} \int_{\R^\dtwo} \sum_{n \geq \tlbN^\iota} |\xi|^{2 |\alphatwo^\iota|_1-2 |\alphatwo|_1} \\
\times \langle n \rangle^{2|\alphaone^\iota|_1} \left| \partial_t^{\alphaone^\iota} \partial_{\xi}^{\alphatwo^\iota} \tilde m(n-s,\xi) \right|^2 \, \tilde h^2_{n}(y',\xi) \,d\xi \,d\nu_\iota(s),
\end{multline*}
where $\tlbN^\iota \defeq (\tlbN^\iota_1,\dots,\tlbN^\iota_\done)$, $\tlbN^\iota_\jone \defeq \max\{0,\lbN^\iota_\jone+2r^\iota_\jone\} \geq 2(r^\iota_\jone-\tilde\alphaone^\iota_\jone+\alphaone^\iota_\jone)$ for all $\jone \in \{1,\dots,\done\}$, $J_\iota = \prod_{\jone=1}^\done \leftclosedint 2 (r^\iota_\jone - \tilde\alphaone^\iota_\jone), 2 (r^\iota_\jone - \tilde\alphaone^\iota_\jone + \alphaone^\iota_\jone) \rightclosedint$, and $\nu_\iota$ is a probability measure on $J_\iota$. Note that all the components of the first argument $n-s$ of $\tilde m$ in the right-hand side of the previous inequality are always nonnegative, since $n \geq \tlbN^\iota$ and $s \in J_\iota$.

A smooth extension $\tilde m$ of $m$ is given by
\[\tilde m(t,\xi) = F(|\xi|(2t_1+\dots+2t_\done+\done)) \, \chi((2t_1+\dots+2t_\done+\done)/M)\]
for $\xi \in \R^\dtwo \setminus \{0\}$ and $t \in \leftopenint -1/2,\infty \rightopenint^{d_1}$. An inductive argument then shows that
\[\partial_\xi^{\alphatwo^\iota} \partial_t^{\alphaone^\iota} \tilde m(t,\xi) = \sum_{\substack {0 \leq a \leq |\alphaone^\iota|_1 \\ 0 \leq b \leq |\alphatwo^\iota|_1}} M^{a-|\alphaone^\iota|_1} \chi^{(|\alphaone^\iota|_1 - a)}(\langle t \rangle/M) \, \Psi_{\alphatwo^\iota,a,b}(\xi) \, \langle t \rangle^{b} F^{(a+b)}(|\xi| \langle t \rangle)\]
for all $t \in \leftclosedint 0,\infty \rightopenint^\done$, where the $\Psi_{\alphatwo^\iota,a,b} : \R^d \setminus \{0\} \to \C$ are smooth functions, homogeneous of degree $a+b - |\alphatwo^\iota|_1$. Hence
\[|\partial_\xi^{\alphatwo^\iota} \partial_t^{\alphaone^\iota} \tilde m(t,\xi)|^2 \leq C_{\chi,\iota} \sum_{v = 0}^{|\alphaone^\iota|_1 +|\alphatwo^\iota|_1} |\xi|^{2v -2|\alphatwo^\iota|_1} \, M^{2v-2|\alphaone^\iota|_1} |F^{(v)}(|\xi| \langle t \rangle)|^2 \, \tilde\chi(\langle t \rangle/M)\]
for all $t \in \leftclosedint 0,\infty \rightopenint^\done$, where $\tilde\chi$ is the characteristic function of $\leftclosedint 1/2,2\rightclosedint$; therefore, since $|\alphaone^\iota|_1 +|\alphatwo^\iota|_1 \leq |\alphatwo|_1$ for all $\iota \in I_\beta$, we have
\begin{multline*}
\int_\Space \left| (x''-y'')^\alphatwo \, \Kern_{F_M(L,\vecT)}(x,y)\right|^2 \,dx 
\leq C_{\chi,\alphatwo} \sum_{v = 0}^{|\alphatwo|_1} M^{2v} \\
\times \sum_{\iota \in I_\alphatwo} \sum_{\substack{n \geq \tlbN^\iota \\ c_\iota^{-1} \leq \langle n \rangle/M \leq c_\iota}} \int_{J_\iota} \int_{\R^\dtwo} |\xi|^{2 v-2 |\alphatwo|_1}
 \, | F^{(v)}(|\xi| \langle n-s \rangle) |^2 \, \tilde h^2_{n}(y',\xi) \,d\xi \,d\nu_\iota(s),
\end{multline*}
where $c_\iota \in \leftclosedint 2,\infty \rightclosedint$ is chosen so that $2/c_\iota \leq \langle n \rangle / \langle n-s \rangle \leq c_\iota/2$ for all $n \geq \tlbN^\iota$ and $s \in J_\iota$.

If $\bdL_\iota \defeq \tlbN^\iota_1 + \dots + \tlbN^\iota_\done$, $\tilde J_\iota$ is the interval in $\R$ which is the image of $J_\iota$ via the map $(s_1,\dots,s_\done) \mapsto s_1 + \dots + s_d$, and $\tilde \nu_\iota$ is the corresponding push-forward of $\nu_\iota$ on $\tilde J_\iota$, then $\bdL_\iota \geq \max \tilde J_\iota$ and
\begin{multline*}
\int_\Space \left| (x''-y'')^\alphatwo \, \Kern_{F_M(L,\vecT)}(x,y)\right|^2 \,dx 
\leq C_{\chi,\alphatwo} \sum_{v = 0}^{|\alphatwo|_1} \sum_{\iota \in I_\alphatwo} \sum_{\substack{\ell \geq \bdL_\iota \\ c_\iota^{-1} \leq [\ell]/M \leq c_\iota}} M^{2v} \\
\times \int_{\tilde J_\iota} \int_{\R^\dtwo} |\xi|^{2 v-2 |\alphatwo|_1} 
| F^{(v)}(|\xi| [\ell-s] ) |^2 \, \sum_{n \tc |n|_1 = \ell} \tilde h^2_{n}(y',\xi) \,d\xi \,d\tilde\nu_\iota(s),
\end{multline*}
where $[\ell] = 2\ell + \done$. Note that $\sum_{n \tc |n|_1 = \ell} \tilde h^2_{n}(y',\xi) = |\xi|^{\done/2} H_{\done,\ell}(|\xi|^{1/2} y')$, and that the integrand in $\xi \in \R^\dtwo$ depends only on $|\xi|$, hence
\begin{multline*}
\int_\Space \left| (x''-y'')^\alphatwo \, \Kern_{F_M(L,\vecT)}(x,y)\right|^2 \,dx 
\leq C_{\chi,\alphatwo} \sum_{v = 0}^{|\alphatwo|_1} \sum_{\iota \in I_\alphatwo} \sum_{\substack{\ell \geq \bdL_\iota \\ c_\iota^{-1} \leq [\ell]/M \leq c_\iota}} M^{2v} \\
\times \int_{\tilde J_\iota} \int_0^\infty \lambda^{2 v-2 |\alphatwo|_1+d_1/2+d_2} 
| F^{(v)}(\lambda [\ell-s] ) |^2 \, H_{\done,\ell}(\lambda^{1/2} y') \,\frac{d\lambda}{\lambda} \,d\tilde\nu_\iota(s).
\end{multline*}
Note that $[\ell-s] \sim [\ell] \sim M$ in the domain of summation/integration in the right-hand side; a rescaling in the integral in $\lambda$, together with the fact that $\supp F \subseteq K$ and $K \subseteq \leftopenint 0,\infty \rightopenint$ is compact then gives
\begin{multline*}
\int_\Space \left| (x''-y'')^\alphatwo \, \Kern_{F_M(L,\vecT)}(x,y)\right|^2 \,dx 
\leq C_{\chi,K,\alphatwo} \sum_{v = 0}^{|\alphatwo|_1} \sum_{\iota \in I_\alphatwo} \int_0^\infty | F^{(v)}(\lambda) |^2 \\
\times  M^{2|\alphatwo|_1-\dtwo-\done/2} \int_{\tilde J_\iota} \sum_{\substack{\ell \geq \bdL_\iota \\ c_\iota^{-1} \leq [\ell]/M \leq c_\iota}} H_{\done,\ell} \left(\frac{\lambda^{1/2} y'}{[\ell-s]^{1/2}} \right) \,d\tilde\nu_\iota(s) \,d\lambda.
\end{multline*}
On the other hand, from Lemma~\ref{lem:newhermiteestimate} we easily obtain
\[M^{-\done/2}  \sum_{\substack{\ell \geq \bdL_\iota \\ c_\iota^{-1} \leq [\ell]/M \leq c_\iota}} H_{\done,\ell} \left(\frac{\lambda^{1/2} y'}{[\ell-s]^{1/2}} \right) \leq C_{K,\beta} \left(\chr_{\leftclosedint 0, c_{K,\beta}\rightclosedint}(|y'|/M) + e^{-|y'|}\right),\]
uniformly in $\iota \in I_\beta$, $s \in \tilde J_\iota$, $\lambda \in K$, by choosing $c_{K,\beta}$ sufficiently large, and we are done.
\end{proof}

Define the weight $w : \Space \times \Space \to \leftclosedint 1,\infty \rightopenint$ by
\[w(x,y) = 1+\frac{|x''-y''|}{1+|y'|}.\]

\begin{prp}\label{prp:weightedplancherel}
Let $F : \R \to \C$ be smooth and such that $\supp F \subseteq K$ for some compact $K \subseteq \leftopenint 0,\infty \rightopenint$. For all $r \in \leftclosedint 0,\dtwo/2 \rightopenint$, we have
\[
\esssup_{y \in \Space} |B(y,1)| \int_\Space \left| w(x,y)^r \, \Kern_{F(L)}(x,y)\right|^2 \,dx
 \leq C_{K,r} \|F\|^2_{W_2^r}.
\]
\end{prp}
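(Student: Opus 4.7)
The plan is to reduce the proposition to Lemma~\ref{lem:weightedplancherel} via a dyadic decomposition of $F(L)$ along the spectrum of $\vecT$. Concretely, I would pick a nonnegative $\chi \in C^\infty_c(\leftopenint 0,\infty \rightopenint)$ with $\supp \chi \subseteq [1/2,2]$ and $\sum_{k \in \Z} \chi(2^{-k} s) = 1$ for $s > 0$, and set $F_M(\lambda,\xi) = F(\lambda) \chi(\lambda/(M|\xi|))$ as in that lemma. Summing over $M = 2^k$ then gives $\Kern_{F(L)} = \sum_k \Kern_{F_{2^k}(L,\vecT)}$; moreover, on the joint spectrum of $(L,\vecT)$ one has $\lambda/|\xi| = 2|n|_1 + \done \geq \done$, so only dyadic scales $M$ bounded away from $0$ contribute nontrivially.

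Next, I would combine the standard volume estimate $|B(y,1)| \sim (1+|y'|)^{\dtwo}$ for Grushin balls (cf.\ \cite{martini_grushin}) with the pointwise bound
\[
w(x,y)^{2r} \leq C_r\bigl(1 + |x''-y''|^{2r}(1+|y'|)^{-2r}\bigr),
\]
so that applying Lemma~\ref{lem:weightedplancherel} both with exponent $0$ and with exponent $r$ yields, for each dyadic $M \geq 1$,
\[
\|w(\cdot,y)^r \, \Kern_{F_M(L,\vecT)}(\cdot,y)\|_2 \leq C \|F\|_{W_2^r} \bigl(M^{-\dtwo/2} + (1+|y'|)^{-r} M^{r-\dtwo/2}\bigr) \bigl(\chr_{\leftclosedint 0,c\rightclosedint}(|y'|/M) + e^{-|y'|}\bigr)^{1/2},
\]
where $c = c_{K,r}$ is the constant from the lemma.

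The main step is then to use Minkowski's inequality $\|w^r \Kern_{F(L)}(\cdot,y)\|_2 \leq \sum_M \|w^r \Kern_{F_M(L,\vecT)}(\cdot,y)\|_2$ and to sum the resulting estimates by splitting into two ranges. In the range $M \geq c^{-1}(1+|y'|)$ the characteristic function equals $1$, and both geometric series $\sum M^{-\dtwo/2}$ and $(1+|y'|)^{-r} \sum M^{r-\dtwo/2}$ converge precisely because $r < \dtwo/2$, yielding a total bound of order $(1+|y'|)^{-\dtwo/2}$. In the complementary range $1 \leq M < c^{-1}(1+|y'|)$ one has at most $O(\log(2+|y'|))$ dyadic scales, each controlled by the innocuous factor $e^{-|y'|/2}$, so this range contributes negligibly. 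Collecting, $\|w^r \Kern_{F(L)}(\cdot,y)\|_2 \leq C \|F\|_{W_2^r} (1+|y'|)^{-\dtwo/2}$, and multiplying the square of this bound by $|B(y,1)| \sim (1+|y'|)^{\dtwo}$ produces the claim.

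The main obstacle I expect is a conceptual one rather than a computational one: making precise that the decomposition $F(L) = \sum_k F_{2^k}(L,\vecT)$ holds at the kernel level (each $F_M$ depends on $\xi$ while $F(L)$ does not, so one must invoke the joint functional calculus and verify that $F_M$ vanishes on the joint spectrum for $M$ small), and checking that the bound $r < \dtwo/2$ is exactly what is needed so that both pieces $M^{-\dtwo/2}$ and $(1+|y'|)^{-r} M^{r-\dtwo/2}$ combine with the ball volume $(1+|y'|)^{\dtwo}$ to produce an estimate uniform in $y$. Once these two points are in place, the rest of the argument is routine bookkeeping.
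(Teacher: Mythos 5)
Your proposal is correct and follows essentially the same route as the paper's proof: a dyadic decomposition of $F(L)$ along the spectrum of $\vecT$ reducing to Lemma~\ref{lem:weightedplancherel}, Minkowski's inequality, and summation of the resulting geometric series using $r < \dtwo/2$ together with $|B(y,1)| \sim \max\{1,|y'|\}^{\dtwo}$. The only immaterial difference is that you combine the exponent-$0$ and exponent-$r$ estimates before summing over $M$, whereas the paper sums the weighted estimate first and adjoins the $r=0$ case at the end.
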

\begin{proof}
Take $\chi \in C^\infty_c(\leftopenint 0,\infty \rightopenint)$ such that $\supp \chi \subseteq \leftclosedint 1/2,2 \rightclosedint$ and $\sum_{k \in \Z} \chi(2^{-k} t) = 1$ for all $t \in \leftopenint 0,\infty \rightopenint$. If $F_M$ is defined for all $M \in \leftclosedint 1,\infty \rightopenint$ as in Lemma~\ref{lem:weightedplancherel}, then
\[F(L) = \sum_{k \in \N} F_{2^k}(L,\vecT)\]
(with convergence in the strong sense). Hence an estimate for $\Kern_{F(L)}$ can be obtained, via Minkowski's inequality, by summing the corresponding estimates for $\Kern_{F_{2^k}}(L,\vecT)$ given by Lemma~\ref{lem:weightedplancherel}. On the other hand, since $|B(y,1)| \sim \max\{1,|y'|\}^\dtwo$ \cite[Proposition 3]{martini_grushin}, it is easily checked that
\[\begin{split}
\sum_{k \in \N} 2^{k(r-\dtwo/2)} \left( \chr_{\leftclosedint0,c_{K,r} \rightclosedint}(2^{-k} |y'|) + e^{-|y'|/2} \right) &\leq C_{K,r} \max\{1,|y'|\}^{r-\dtwo/2} \\
& \leq C_{K,r} \frac{(1+|y'|)^r}{|B(y,1)|^{1/2}}
\end{split}\]
when $r \in \leftclosedint 0,\dtwo/2 \rightopenint$, therefore from Lemma~\ref{lem:weightedplancherel} we obtain that
\[
|B(y,1)| \int_\Space \left| \left(\frac{|x''-y''|}{1+|y'|}\right)^r \, \Kern_{F(L)}(x,y)\right|^2 \,dx 
\leq C_{K,r} \|F\|_{W_2^r}^2.
\]
The conclusion follows by combining the last inequality with the corresponding one for $r = 0$.
\end{proof}

\section{The multiplier theorems}

Now we need some properties of the weight $w$.

\begin{lem}\label{lem:weight}
For all $x,y \in \Space$,
\[w(x,y) \leq C (1 + \dist(x,y))^{2}.\]
Moreover, if $\alpha,r \in \leftclosedint 0,\infty \rightopenint$ satisfy
\[r < \dtwo/2, \qquad \alpha + 2r > (\done+2\dtwo)/2,\]
then, for all $y \in \Space,$
\[
\int_\Space w(x,y)^{-2r} \, (1+\dist(x,y))^{-2\alpha} \,dx \leq C_{\alpha,r} |B(y,1)|.
\]
\end{lem}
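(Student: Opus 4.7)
The plan is to derive (1) from the known comparison between the Grushin control distance and its box-type substitute, and then to handle (2) via a dyadic decomposition combined with an elementary integral computation.

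For part~(1), I would invoke the standard estimates on $\dist$ (see, e.g., \cite{martini_grushin}): $|x'-y'|\leq \dist(x,y)$ and $|x''-y''|\leq C\,\dist(x,y)(\dist(x,y)+|x'|+|y'|)$. The first inequality forces $|x'|\leq |y'|+\dist(x,y)$, so the second yields $|x''-y''|\leq C(1+|y'|)(1+\dist(x,y))^2$; dividing by $1+|y'|$ then gives $w(x,y)\leq C(1+\dist(x,y))^2$.

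For part~(2), I would set $\rho:=1+|y'|$ (so $|B(y,1)|\sim \rho^{\dtwo}$ by \cite[Proposition~3]{martini_grushin}) and decompose $\Space=\bigsqcup_{k\geq 0}A_k$ with $A_0=\{x\tc \dist(x,y)<1\}$ and $A_k=\{x\tc 2^{k-1}\leq \dist(x,y)<2^k\}$ for $k\geq 1$. By part~(1), each $A_k$ is contained in the Euclidean box $\{x\tc |x'-y'|\leq 2^k,\ |x''-y''|\leq C(1+2^k)^2\rho\}$. The substitution $v=\rho w$, together with the hypothesis $r<\dtwo/2$, gives the elementary estimate
\[
\int_{|v|\leq M}(1+|v|/\rho)^{-2r}\,dv \leq C\,\rho^{\dtwo}(1+M/\rho)^{\dtwo-2r}\qquad (M\geq 0).
\]
Applying this with $M=C(1+2^k)^2\rho$ and combining with the Euclidean volume $\lesssim 2^{k\done}$ of the $x'$-ball, I get $\int_{A_k}w(x,y)^{-2r}dx\leq C\rho^{\dtwo}\,2^{k\done}(1+2^k)^{2(\dtwo-2r)}$. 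Since $(1+\dist(x,y))^{-2\alpha}\leq C 2^{-2k\alpha}$ on $A_k$, summation yields
\[
\int_\Space w(x,y)^{-2r}(1+\dist(x,y))^{-2\alpha}\,dx \leq C\rho^{\dtwo}\sum_{k\geq 0}2^{k(\done+2\dtwo-4r-2\alpha)},
\]
and the hypothesis $\alpha+2r>(\done+2\dtwo)/2$ makes the geometric series converge, giving $\lesssim \rho^{\dtwo}\sim |B(y,1)|$.

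The main delicacy lies in part~(1), which relies on the explicit structure of $\dist$ on the Grushin space; once that bound is in hand, part~(2) reduces to a single dyadic calculation in which $r<\dtwo/2$ fixes the correct scaling in the $x''$ variable and $\alpha+2r>(\done+2\dtwo)/2$ is used precisely to ensure convergence of the geometric series in $k$.
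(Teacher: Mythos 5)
Your proposal is correct, and the two bounds you quote for the Grushin distance ($|x'-y'|\lesssim\dist(x,y)$ and $|x''-y''|\lesssim\dist(x,y)(\dist(x,y)+|x'|+|y'|)$) are indeed equivalent to the characterization $\dist\sim\min\{\dist_1,\dist_2\}$ with $\dist_1=|x'-y'|+|x''-y''|^{1/2}$, $\dist_2=|x'-y'|+|x''-y''|/(|x'|+|y'|)$ that the paper works with, so part (1) is essentially the paper's argument in a repackaged form. For part (2) your route genuinely differs: the paper keeps the two quasi-distances $\dist_1,\dist_2$ separate and, for each, estimates the integral over all of $\Space$ directly by splitting the exponent $\alpha=\alpha'+\alpha''$ with a somewhat delicate choice of $\alpha''$ (especially in the $\dist_2$ case, where one must absorb a growing factor $(1+|x'|/(1+|y'|))^{2\alpha''}$ into the decay in $x'$), whereas you decompose into dyadic annuli $\{\dist(x,y)\sim 2^k\}$, trap each annulus in a Euclidean box of dimensions $2^k\times C(1+2^k)^2(1+|y'|)$, and reduce everything to the one elementary integral $\int_{|v|\le M}(1+|v|/\rho)^{-2r}\,dv\lesssim\rho^{\dtwo}(1+M/\rho)^{\dtwo-2r}$ plus a geometric series. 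Your version makes the roles of the two hypotheses cleaner ($r<\dtwo/2$ controls the single-scale integral, $\alpha+2r>(\done+2\dtwo)/2$ gives summability over scales) and avoids the case distinction; the paper's version avoids the dyadic bookkeeping. Both rest on the same two geometric inputs, namely the structure of $\dist$ and $|B(y,1)|\sim\max\{1,|y'|\}^{\dtwo}$, so I see no gap.
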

\begin{proof}
Recall that $\dist(x,y) \sim \min\{\dist_1(x,y),\dist_2(x,y)\}$, where
\begin{equation}\label{eq:partialdistances}
\dist_1(x,y) = |x'-y'| + |x''-y''|^{1/2}, \qquad \dist_2(x,y) = |x'-y'| + \frac{|x''-y''|}{|x'|+|y'|},
\end{equation}
while $|B(y,1)| \sim \max\{1,|y'|\}^\dtwo$ \cite[Proposition 3]{martini_grushin}. The conclusion will then follow by proving that
\begin{gather}
\label{eq:weightdistestimatebis} w(x,y) \leq C (1+\dist_i(x,y))^2, \\
\label{eq:weightintestimatebis} \int_\Space w(x,y)^{-2r} \, (1+\dist_i(x,y))^{-2\alpha} \,dx \leq C_{\alpha,r} (1+|y'|)^\dtwo.
\end{gather}
for $i=1,2$.

As for \eqref{eq:weightdistestimatebis}, when $i=1$,
\[w(x,y) \leq (1 + |x''-y''|^{1/2})^2 \leq (1+\dist_1(x,y))^2.\]
whereas, when $i = 2$,
\[
w(x,y) = 1 + \frac{|x''-y''|}{|x'|+|y'|} \frac{|x'|+|y'|}{1+|y'|} \leq 1 + \dist_2(x,y) \, (2+|x'-y'|) \leq (1+\dist_2(x,y))^2.
\]

About \eqref{eq:weightintestimatebis}, in the case $i = 1$, since $\alpha > \done/2 + (\dtwo-2r)$, we can decompose $\alpha = \alpha' + \alpha''$ so that $\alpha' > \done/2 > 0$ and $\alpha'' > \dtwo-2r > 0$, and therefore
\begin{multline*}
\int_\Space w(x,y)^{-2r} \, (1+\dist_1(x,y))^{-2\alpha} \,dx \\
\leq \int_\Space \left(1+\frac{|x''|}{1+|y'|}\right)^{-2r} \, (1+|x'|)^{-2\alpha'} \, (1+|x''|)^{-\alpha''} \,dx \\
\leq (1+|y'|)^{2r} \int_\Space (1+|x'|)^{-2\alpha'} \, (1+|x''|)^{-2r-\alpha''} \,dx;
\end{multline*}
the last integral is finite since $2\alpha' > \done$ and $2r+\alpha'' > \dtwo$, and moreover $2r < \dtwo$.

In the case $i = 2$, instead, since $\alpha - \done/2 > \dtwo -2r$, we can choose $\alpha''$ so that $2\alpha'' \in \leftopenint \dtwo-2r, \alpha - \done/2\rightopenint$; in particular $0 < \alpha'' < \alpha/2$, hence $\alpha' = \alpha - \alpha'' > \alpha/2 > 0$. Then
\begin{multline*}
\int_\Space w(x,y)^{-2r} \, (1+\dist_2(x,y))^{-2\alpha} \,dx \\
\leq C_{\alpha,r} \int_\Space  
\left(1+\frac{|x''|}{1+|y'|}\right)^{-2r} \, (1+|x'|)^{-2\alpha'} \, \left(1+\frac{|x''|}{1+|x'|+|y'|}\right)^{-2\alpha''} \,dx \\
\leq C_{\alpha,r} \int_\Space \left(1+\frac{|x'|}{1+|y'|}\right)^{2\alpha''} \left(1+\frac{|x''|}{1+|y'|}\right)^{-2r-2\alpha''} \, (1+|x'|)^{-2\alpha'} \,dx.
\end{multline*}
Since $2\alpha'' + 2r > \dtwo$, the integral in $x''$ converges, and moreover $2\alpha'' > 0$, hence the denominator $1+|y'|$ in the first factor can be discarded, and we obtain
\begin{multline*}
\int_\Space w(x,y)^{-2r} \, (1+\dist_2(x,y))^{-2\alpha} \,dx \leq C_{\alpha,r} (1+|y'|)^\dtwo \int_{\R^\done} (1+|x'|)^{-2\alpha'+2\alpha''} \,dx'.
\end{multline*}
Since $2\alpha'-2\alpha'' = 2(\alpha  - 2\alpha'') > \done$, the integral in $x'$ converges too, and we are done.
\end{proof}

Via interpolation, we are now able to give a strengthened version of the standard weighted $L^2$ estimate due to the Gaussian heat kernel bounds for $L$ (see \cite[Proposition 11]{martini_grushin} and references therein).

\begin{prp}\label{prp:interpolatedestimate}
Let $\alpha,\beta,r \in \leftclosedint 0,\infty \rightopenint$ be such that $r < \dtwo/2$ and $\beta > \alpha + r$. Let $K \subseteq \leftopenint 0,\infty \rightopenint$ be compact. For all smooth $F : \R \to \C$ with $\supp F \subseteq K$, we have
\[
\esssup_{y \in \Space} |B(y,1)|^{1/2} \left\| w(\cdot,y)^r \, (1+\dist(\cdot,y))^\alpha \, \Kern_{F(L)}(\cdot,y)\right\|_2 \,dx
 \leq C_{K,r,\alpha,\beta} \|F\|_{W_2^\beta}.
\]
\end{prp}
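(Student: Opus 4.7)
The plan is to obtain the claimed mixed-weight estimate by complex interpolation between two ``one-weight'' endpoint estimates already at our disposal. The first endpoint is the standard weighted $L^2$ Plancherel estimate coming from the Gaussian heat kernel bounds for $L$ (\cite[Proposition~11]{martini_grushin}):
\[
|B(y,1)|^{1/2} \| (1+\dist(\cdot,y))^{\alpha_0} \Kern_{F(L)}(\cdot,y) \|_2 \le C \|F\|_{W_2^{\beta_0}}
\]
for every $\beta_0 > \alpha_0 \ge 0$. The second endpoint is Proposition~\ref{prp:weightedplancherel}, which, combined with the trivial embedding $W_2^{\beta_1} \hookrightarrow W_2^{r_1}$ for $\beta_1 \ge r_1$, gives
\[
|B(y,1)|^{1/2} \| w(\cdot,y)^{r_1} \Kern_{F(L)}(\cdot,y) \|_2 \le C \|F\|_{W_2^{\beta_1}}
\]
for every $r_1 \in \leftclosedint 0, \dtwo/2\rightopenint$ and $\beta_1 \ge r_1$.

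The cases $r=0$ (use only the first endpoint) and $\alpha=0$ (use only the second) are immediate. For the genuinely mixed case $r,\alpha>0$, pick $\theta \in \leftopenint 2r/\dtwo, 1\rightopenint$ and set $r_1 := r/\theta \in \leftopenint 0,\dtwo/2\rightopenint$ and $\alpha_0 := \alpha/(1-\theta)$. Since $(1-\theta)\alpha_0+\theta r_1 = \alpha+r < \beta$, we may further choose $\beta_0>\alpha_0$ and $\beta_1>r_1$ with $(1-\theta)\beta_0+\theta\beta_1 \le \beta$.

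For each fixed $y \in \Space$, regard $T_y \colon F \mapsto \Kern_{F(L)}(\cdot,y)$ as a linear map on smooth functions supported in $K$. The two endpoint estimates express bounds of $T_y$ as a map
\[
W_2^{\beta_0} \to L^2\bigl(\Space,(1+\dist(\cdot,y))^{2\alpha_0}\,dx\bigr), \qquad W_2^{\beta_1} \to L^2\bigl(\Space,w(\cdot,y)^{2r_1}\,dx\bigr),
\]
each with operator norm $\le C\,|B(y,1)|^{-1/2}$, uniformly in $y$. The standard complex-interpolation identities $[W_2^{\beta_0},W_2^{\beta_1}]_\theta = W_2^{(1-\theta)\beta_0+\theta\beta_1}$ for the Bessel-potential scale, and $[L^2(u_0\,dx), L^2(u_1\,dx)]_\theta = L^2(u_0^{1-\theta}u_1^\theta\,dx)$ for weighted $L^2$ (Stein--Weiss), then yield
\[
T_y \colon W_2^{(1-\theta)\beta_0+\theta\beta_1} \to L^2\bigl(\Space,(1+\dist(\cdot,y))^{2\alpha} w(\cdot,y)^{2r}\,dx\bigr)
\]
with the same uniform operator bound $C\,|B(y,1)|^{-1/2}$. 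Since $W_2^\beta \hookrightarrow W_2^{(1-\theta)\beta_0+\theta\beta_1}$, this is precisely the desired conclusion.

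The one technical subtlety will be verifying the complex interpolation with constants \emph{uniform in $y$}. This is done by realising the endpoints as the values at $\Re z = 0,1$ of the analytic family $z \mapsto (1+\dist(\cdot,y))^{\alpha_0(1-z)} w(\cdot,y)^{r_1 z} T_y$ on the strip $\{0 \le \Re z \le 1\}$, and checking that it has admissible growth in the sense of Stein's interpolation theorem. Since both weights are pointwise $\ge 1$ and finite, the analytic family is well-behaved and the uniformity in $y$ of the endpoint bounds passes directly to the interpolated bound.
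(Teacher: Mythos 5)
Your argument is correct and is in substance the interpolation argument the paper itself uses (the paper's proof is a three-line appeal to \cite[Lemma~1.2]{mauceri_vectorvalued_1990} and \cite[Proposition 13]{martini_grushin}); you have simply arranged the endpoints differently. The paper keeps the factor $w(\cdot,y)^r$ at \emph{both} endpoints: one endpoint is Proposition~\ref{prp:weightedplancherel} itself ($\alpha=0$, order $r$), the other is the mixed-weight estimate for large order, obtained by dominating $w(x,y)^r\le C(1+\dist(x,y))^{2r}$ (first part of Lemma~\ref{lem:weight}) and invoking the heat-kernel estimate with exponent $\alpha_1$, which costs $\beta_1>\alpha_1+2r+1/2$; the range $\beta>\alpha+r$ is then reached by letting the interpolation parameter tend to the Proposition~\ref{prp:weightedplancherel} endpoint. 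You instead interpolate between the two ``pure'' weights $(1+\dist)^{\alpha_0}$ and $w^{r_1}$, which avoids Lemma~\ref{lem:weight} here entirely and lets a single fixed $\theta\in\leftopenint 2r/\dtwo,1\rightopenint$ cover the whole range; the arithmetic ($r_1=r/\theta<\dtwo/2$, $(1-\theta)\alpha_0+\theta r_1=\alpha+r<\beta$) checks out, as do the Stein--Weiss and Bessel-potential interpolation identities and the uniformity in $y$.

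One small inaccuracy: you attribute the endpoint $\beta_0>\alpha_0$ to \cite[Proposition~11]{martini_grushin}, but that proposition (the direct consequence of the Gaussian heat kernel bounds) only gives $\beta_0>\alpha_0+1/2$ --- this is visible from the fact that the paper's own proof needs $\beta>\alpha+2r+1/2$ at its second endpoint. The sharper statement $\beta_0>\alpha_0$ is true (it is the $\gamma=0$ case of \eqref{eq:plancherel} asserted in Section~2, itself obtained by a further Mauceri--Meda interpolation with the Plancherel case $\alpha_0=\beta_0=0$), so your argument stands as written; alternatively, your scheme survives the $+1/2$ loss at that endpoint by taking $\theta$ close to $1$, since the interpolated order then tends to $\alpha+(1-\theta)/2+r$. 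Either fix is one line, but the citation as given does not quite support the claim you make from it.
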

\begin{proof}
For $\alpha = 0$ and $\beta \geq r$, the inequality is given by Proposition~\ref{prp:weightedplancherel}.

On the other hand, for arbitrary $\alpha$, if $\beta > \alpha + 2r + 1/2$, then the inequality follows from Lemma~\ref{lem:weight} and \cite[Proposition 11]{martini_grushin}.

The full range $\beta > \alpha + r$ is then recovered by interpolation (cf.\ \cite[Lemma~1.2]{mauceri_vectorvalued_1990} and \cite[Proposition 13]{martini_grushin}).
\end{proof}

We are finally able to prove the fundamental estimate, and consequently our theorems.

\begin{proof}[Proof of Proposition~\ref{prp:l1estimate}]
Since the operator $L$ and the distance $\dist$ are homogeneous with respect to the dilations $\dil_r$, it is not restrictive to assume that $R = 1$.

Let $r,\alpha' \in \leftclosedint 0,\infty \rightopenint$. For all $y \in \Space$, H\"older's inequality gives
\begin{multline*}
\| (1+\dist(\cdot,y))^\alpha \, \Kern_{F(L)}(\cdot,y)\|_1 \leq \left(\int_\Space w(x,y)^{-2r} \, (1+\dist(x,y))^{-2\alpha'} \,dx \right)^{1/2} \\
\times \| w(\cdot,y)^r \, (1+\dist(\cdot,y))^{\alpha+\alpha'} \, \Kern_{F(L)}(\cdot,y)\|_2.
\end{multline*}
The first factor on the right-hand side can be controlled by Lemma~\ref{lem:weight} if
\[r < \dtwo/2, \qquad \alpha' + 2r > (\done+2\dtwo)/2,\]
while the second factor can be controlled by Proposition~\ref{prp:interpolatedestimate} if moreover
\[\beta > \alpha+\alpha'+r.\]
Under our hypotheses, $\epsilon \defeq \beta - \alpha - (\done+\dtwo)/2 > 0$; therefore, if we choose $r \in \leftopenint d_2/2-\epsilon,d_2/2\rightopenint$ and $\alpha' \in \leftopenint d_1/2+d_2-2r,\beta-\alpha-r\rightopenint$, then the above conditions are satisfied, and we are done.
\end{proof}

\bibliographystyle{amsabbrv}
\bibliography{grushin2}

\end{document}